\newcommand{\mM}{\mathcal{M}}
\newcommand{\fp}{\mathfrak{p}}
\newcommand{\bfA}{\mathbf{A}}
\newcommand{\bfC}{\mathbf{C}}
\newcommand{\bfF}{\mathbf{F}}
\newcommand{\bfQ}{\mathbf{Q}}
\newcommand{\bfZ}{\mathbf{Z}}
\newcommand{\Oo}{\mathcal{O}}
\newcommand{\com}[1]{\vspace{5 mm}\par \noindent
\marginpar{\textsc{Comment}} \framebox{\begin{minipage}[c]{0.95
\textwidth} \tt #1 \end{minipage}}\vspace{5 mm}\par}
\newcommand{\ov}{\overline}
\newcommand{\be}{\begin{equation}}
\newcommand{\ee}{\end{equation}}
\newcommand{\bes}{\begin{equation*}}
\newcommand{\ees}{\end{equation*}}
\newcommand{\bs}{\begin{split}}
\newcommand{\es}{\end{split}}
\newcommand{\bss}{\begin{split*}}
\newcommand{\ess}{\end{split*}}
\newcommand{\bmat}{\left[ \begin{matrix}}
\newcommand{\emat}{\end{matrix} \right]}
\newcommand{\bsmat}{\left[ \begin{smallmatrix}}
\newcommand{\esmat}{\end{smallmatrix} \right]}
\newcommand{\bml}{\begin{multline}}
\newcommand{\eml}{\end{multline}}
\newcommand{\bmls}{\begin{multline*}}
\newcommand{\emls}{\end{multline*}}
\DeclareMathOperator{\Ext}{Ext}
\DeclareMathOperator{\Frob}{Frob}
\DeclareMathOperator{\GL}{GL}
\DeclareMathOperator{\Hom}{Hom}
\newcommand{\tr}{\textup{tr}\hspace{2pt}}
\theoremstyle{plain}
\newtheorem{thm}{Theorem}
\newtheorem{prop}[thm]{Proposition}
\newtheorem{cor}[thm]{Corollary}
\newtheorem{lemma}[thm]{Lemma}
\newtheorem{conj}[thm]{Conjecture}
\theoremstyle{definition}
\newtheorem{definition}[thm]{Definition}
\newtheorem{example}[thm]{Example}
\newtheorem{rem}[thm]{Remark}
\numberwithin{thm}{section}
\numberwithin{equation}{section}
\DeclareFontFamily{U}{wncy}{}
    \DeclareFontShape{U}{wncy}{m}{n}{<->wncyr10}{}
    \DeclareSymbolFont{mcy}{U}{wncy}{m}{n}
    \DeclareMathSymbol{\Sha}{\mathord}{mcy}{"58}
\newtheorem*{remark*}{Remark}
\begin{document}
\title[Oddness of residually reducible Galois representations]{Oddness of residually reducible Galois representations}
\author{Tobias Berger$^1$} 
\address{$^1$School of Mathematics and Statistics, University of Sheffield, Hicks Building, Hounsfield Road, Sheffield S3 7RH, UK.}

\subjclass[2000]{11F80, 11F55}

\keywords{Galois representations, Fontaine-Mazur conjecture, Bloch-Kato conjecture}


\date{\today}

\begin{abstract}
We show that suitable
congruences between polarized automorphic forms over a CM field always produce elements in the Selmer group for exactly the $\pm$-Asai (aka tensor induction) representation that is critical in the sense of Deligne. For this we relate the oddness of the associated polarized Galois representations (in the sense of the Bella{\"{\i}}che-Chenevier sign being $+1$) to the parity condition for criticality. Under an assumption similar to  Vandiver's conjecture this also provides evidence for the Fontaine-Mazur conjecture for polarized Galois representations of any even dimension.

\end{abstract}

\maketitle

\section{Introduction} 
Ribet proved in \cite{Ribet76} that, if a prime $p>2$ divides the numerator of the $k$-th Bernoulli number for $k \geq 4$ an even integer, then $p$ divides the order of the part of the class group of $\bfQ(\mu_p)$ on which ${\rm Gal}(\bfQ(\mu_p)/\bfQ)$ acts by the $(1-k)$-th power of the mod $p$ cyclotomic character $\ov{\epsilon}$. The argument (in fact a slight variance of Ribet's proof outlined in \cite{Khare00}) is as follows: One considers the weight $k$ Eisenstein series for ${\rm SL}_2(\bfZ)$ and proves the existence of a weight $k$ cuspidal eigenform that is congruent to the Eisenstein series. Ribet then shows that one can find a lattice in the irreducible odd  $p$-adic Galois representation $\rho_f$ associated to $f$ such that its mod $p$ reduction gives a non-split extension $\begin{pmatrix} 1&*\\0& \ov{\epsilon}^{k-1}\end{pmatrix}$. Note that this proves one direction of the Bloch-Kato conjecture for the Riemann zeta function (proven in \cite{BlochKato90} Theorem 6.1 (i)) in so far as it relates the $p$-divisibility of the value $\frac{\zeta(k)}{\pi^k}$ for the even integer $k$ to that of the order of the Selmer group for the critical motive $\bfQ(1-k)$. For odd $k$ this motive is not critical and Vandiver's conjecture predicts, in fact, that the corresponding Selmer group is trivial. We note that $\rho_f$ is polarized in the sense that $$\rho_f^{\vee} \cong \rho_f \otimes {\rm det}(\rho_f)^{-1} \cong \rho_f \otimes \epsilon^{1-k},$$ and that the characters occurring in the reduction get swapped under this polarization.

We generalize Ribet's strategy using polarized (essentially conjugate dual) Galois representations of CM fields to prove part of the Bloch-Kato conjecture for general Asai (or tensor induction) $L$-functions, our main result in Theorem \ref{general}. For the Asai $L$-functions induced from CM fields a similar parity condition to that for the Riemann zeta function is required for criticality. We demonstrate in Corollary \ref{cor} that this parity condition is linked to the oddness of the polarized Galois representation (where the representation is odd if the sign defined in \cite{BellaicheCheneviersign} equals $+1$, see Definition \ref{signdefn}).

By the Fontaine-Mazur conjecture in \cite{FontaineMazur} all  representations $\sigma:G_{\bfQ} \to \GL_2(\ov{\bfQ}_p)$ that are ``geometric'' (i.e. unramified at almost all primes and potentially semi-stable at $p$) are modular and therefore odd (in the sense that $\det(\sigma(\tilde c))=-1$ for any complex conjugation $\tilde c$), or they must be a Tate twist of an even representation with finite image. Calegari proved the following remarkable result about the non-existence of even geometric representations:
\begin{thm}[\cite{Calegari2012}] \label{cal}
Let $p>7$ be a prime and $\sigma :G_{\bfQ} \to \GL_2(\ov{\bfQ}_p)$ a continuous representation such that $\sigma$ is unramified at all but finitely many primes, and at $p$ is potentially semi-stable with distinct Hodge-Tate weights. Further assume that the residual representation $\ov{\sigma}$ is absolutely irreducible and not of dihedral type, and that $\ov{\sigma}|_{D_p}$ is not a twist of a representation of the form $\begin{pmatrix} \ov{\epsilon} & *\\0& 1 \end{pmatrix}$. Then $\sigma$ is odd.
\end{thm}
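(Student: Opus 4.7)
The plan is to argue by contradiction: suppose $\sigma$ is even, so that $\det \sigma(c) = +1$ for a (equivalently, every) complex conjugation $c \in G_{\bfQ}$. I would derive a contradiction via potential modularity.

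First I would invoke a potential modularity theorem of Taylor--Wiles--Kisin type (for instance, Taylor's results, or the refinements of Barnet-Lamb--Gee--Geraghty--Taylor). The hypotheses in the statement are tuned exactly to this application: absolute irreducibility of $\ov{\sigma}$, the non-dihedral condition, and the precise local condition at $p$ excluding the twist of the specified upper triangular form with $\ov{\epsilon}$ on the diagonal are precisely what is needed to ensure big residual image and a well-behaved local deformation ring at $p$, so that the relevant automorphy lifting theorem applies after a solvable base change. The conclusion is that there exists a totally real Galois extension $F/\bfQ$ and a regular algebraic cuspidal automorphic representation $\pi$ of $\GL_2(\adeleF)$ with $\sigma|_{G_F} \cong \rho_{\pi,p}$, the $p$-adic Galois representation attached to $\pi$; the distinct Hodge-Tate weights hypothesis translates into the regularity of the infinitesimal character of $\pi_\infty$.

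Second, I would use the well-known fact that Galois representations attached to regular algebraic cuspidal automorphic representations of $\GL_2$ over a totally real field are \emph{totally odd}: for every archimedean place $v$ of $F$ and associated complex conjugation $c_v \in G_F$, one has $\det \rho_{\pi,p}(c_v) = -1$. (This reflects that $\pi_v$ is a discrete series representation of $\GL_2(\bfR)$ at each real place.) Now pick any such $v$; since $F/\bfQ$ is totally real, $c_v$ is $G_{\bfQ}$-conjugate to the original $c$, and hence $\det \sigma(c_v) = \det \sigma(c) = +1$, contradicting the total oddness of $\rho_{\pi,p} \cong \sigma|_{G_F}$.

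The main obstacle is the potential modularity step: extracting a modularity lifting theorem that applies verbatim under the stated hypotheses requires checking the full Taylor--Wiles--Kisin list of conditions (in particular, that $\ov{\sigma}(G_{\bfQ(\mu_p)})$ is sufficiently large, given the non-dihedral assumption, and that the local behavior at $p$ fits one of the available local deformation theories). It is precisely the delicate exclusion on $\ov{\sigma}|_{D_p}$ that makes this local verification possible; once potential modularity is in hand, the parity argument via complex conjugation is immediate.
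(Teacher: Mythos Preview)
First, note that the paper does not itself prove this statement; it is quoted from \cite{Calegari2012} as background and motivation, so there is no proof in the paper to compare against.

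Your proposal nonetheless contains a genuine gap. The potential modularity theorems for two-dimensional representations over totally real fields --- Taylor's results and all subsequent refinements in the Taylor--Wiles--Kisin framework --- take \emph{oddness} of $\sigma$ as a standing hypothesis. This is not an incidental technicality: the method works by comparing a Galois deformation ring to a Hecke algebra acting on spaces of Hilbert modular forms, and there are no Hilbert cuspforms whose associated Galois representations are even. Thus one cannot assume $\sigma$ is even, invoke such a theorem, and derive a contradiction; the theorem simply does not apply to even $\sigma$. The hypotheses you list (large residual image, the shape of $\ov\sigma|_{D_p}$) are indeed standard inputs to automorphy lifting, but oddness is always on that list as well, and it is precisely the hypothesis you are trying to establish.

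Calegari's actual argument avoids this circularity by passing to an auxiliary representation whose relevant parity condition can be verified independently of $\det\sigma(c)$. Concretely, one applies potential automorphy not to $\sigma$ but to $\Sym^2\sigma$ (restricted to a suitable CM field): this is three-dimensional and orthogonal, and its Bella{\"\i}che--Chenevier sign is $+1$ regardless of the parity of $\sigma$ (compare Example~\ref{ex5.8}(1) with $\Psi=\det(\sigma)^{-2}$), so the potential automorphy machinery over CM fields genuinely applies. One then invokes Taylor's theorem on the image of complex conjugation in automorphic Galois representations, which bounds $|\tr(\Sym^2\sigma)(c_v)|\le 1$; but if $\sigma$ were even this trace would equal $3$, giving the contradiction. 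The essential difficulty your sketch misses is arranging a potential automorphy input whose hypotheses do not already presuppose the conclusion.
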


In the residually reducible case one can use Ribet's argument to prove the following:

\begin{prop}\label{baby}
Let $p>2$ and $\sigma:G_{\bfQ} \to \GL_2(\ov{\bfQ}_p)$ an irreducible continuous representation such that $\ov{\sigma}^{\rm ss}=\mathbf{1} \oplus \ov{\epsilon}^m$ for $m \in \bfZ_{>0}$. Assume that $\sigma$ is unramified outside of $p$, and either ordinary at $p$ or crystalline at $p$ with Hodge-Tate weights in  $[0,p-3]$. Then Vandiver's conjecture ($p \nmid \#{\rm Cl}(\bfQ(\mu_p)^+)$) implies that $\sigma$ is odd.
\end{prop}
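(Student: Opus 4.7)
The strategy is to split on the parity of $m$. Since $c^2=1$, $\det\sigma(c)\in\{\pm 1\}$; reducing modulo $\mathfrak{p}$ and using $\ov\epsilon(c)^m=(-1)^m$ together with $p>2$ forces $\det\sigma(c)=(-1)^m$. Hence $\sigma$ is odd exactly when $m$ is odd, so the odd-$m$ case is automatic. The substance of the proposition is thus to show that, for $m$ even, Vandiver's conjecture prevents such an irreducible $\sigma$ from existing.

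For $m$ even, I follow Ribet's strategy. By Ribet's lemma (with the Urban-style refinement ensuring one can achieve either orientation) one may find a $G_\bfQ$-stable lattice giving a non-split extension $0\to\ov\epsilon^m\to\ov\sigma\to\mathbf{1}\to 0$. In the ordinary case this orientation is actually forced by the unramified-quotient condition, since $\ov\epsilon^m$ is ramified at $p$ in the range $1\leq m\leq p-3$; the crystalline non-ordinary case is handled by the parallel Fontaine--Laffaille analysis. The cocycle yields a nonzero class $c_\sigma\in H^1(G_\bfQ,\bfF_p(m))$, and the unramified-outside-$p$ and ordinary/crystalline hypotheses place $c_\sigma$ in the Bloch--Kato Selmer group $H^1_f(\bfQ,\bfF_p(m))$.

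The key calculation is to show $H^1_f(\bfQ,\bfF_p(m))=0$ under Vandiver. A direct Greenberg--Wiles computation for $V=\bfF_p(m)$ and its Cartier dual $V^*(1)=\bfF_p(1-m)$, using that $|V^c|=p$ (as $m$ is even), the one-dimensionality of $H^1_f(\bfQ_p,V)$ in this weight range, and $H^1_f(\bfQ_p,V^*(1))=0$ (since $1-m<0$), cancels the local Euler factors at $p$ and $\infty$ to yield
\[
|H^1_f(\bfQ,\bfF_p(m))| = |H^1_{\mathrm{str}}(\bfQ,\bfF_p(1-m))|,
\]
where $H^1_{\mathrm{str}}$ is the strict Selmer group (unramified outside $p$ with trivial local class at $p$). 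Kummer theory for $K=\bfQ(\mu_p)$ then produces an exact sequence
\[
0\to\bigl(\OK^*/p\bigr)^{\ov\epsilon^m}\to H^1_{\{p\}}(\bfQ,\bfF_p(1-m))\to\Cl(K)[p]^{\ov\epsilon^m}\to 0
\]
in which the unit piece is one-dimensional (since $m\not\equiv 0,1\pmod{p-1}$). Brumer's theorem (Leopoldt's conjecture for the abelian field $K$) makes this unit piece inject into the one-dimensional $H^1(\bfQ_p,\bfF_p(1-m))$, identifying $H^1_{\mathrm{str}}(\bfQ,\bfF_p(1-m))$ with the class-group eigenspace $\Cl(K)[p]^{\ov\epsilon^m}$.

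Since $m$ is even, $\ov\epsilon^m$ is an even character of $\Delta=\Gal(K/\bfQ)$, so $\Cl(K)[p]^{\ov\epsilon^m}$ lies in the plus part $\Cl(\bfQ(\mu_p)^+)[p]$, which vanishes by Vandiver. This forces $H^1_f(\bfQ,\bfF_p(m))=0$, contradicting the non-triviality of $c_\sigma$. The principal technical obstacle is the penultimate paragraph: precisely matching the ordinary and non-ordinary crystalline local conditions at $p$ with Bloch--Kato's $H^1_f$ in this weight range, and invoking Brumer--Leopoldt to control the image of the unit piece in the local cohomology at $p$.
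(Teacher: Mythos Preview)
Your argument has a genuine gap at the Leopoldt step. The Kummer exact sequence you write for $H^1_{\{p\}}(\bfQ,\bfF_p(1-m))$ is correct, but the passage to $H^1_{\rm str}(\bfQ,\bfF_p(1-m))\cong\Cl(K)[p]^{\ov\epsilon^m}$ is not: Brumer--Leopoldt gives injectivity of $\OK^*\otimes\bfZ_p$ into the local units, not the mod-$p$ injectivity $(\OK^*/p)^{\ov\epsilon^m}\hookrightarrow H^1(\bfQ_p,\bfF_p(1-m))$ that you need. A direct class-field-theoretic computation (or, more simply, applying your own Kummer sequence to $H^1_{\{p\}}(\bfQ,\bfF_p(m))$ itself, where the unit term $(\OK^*/p)^{\ov\epsilon^{1-m}}$ vanishes for $m$ even) shows that in fact
\[
|H^1_f(\bfQ,\bfF_p(m))|=|H^1_{\rm str}(\bfQ,\bfF_p(1-m))|=|\Cl(K)[p]^{\ov\epsilon^{1-m}}|,
\]
and for $m$ even this is an \emph{odd} eigenspace, which Vandiver does not control. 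Concretely, take $p=37$ and $m=32$: Vandiver holds, yet $\Cl(\bfQ(\mu_{37}))[37]^{\ov\epsilon^{5}}\cong\bfZ/37$, so $H^1_f(\bfQ,\bfF_{37}(32))\neq 0$ and your strategy of proving $H^1_f(\bfQ,\bfF_p(m))=0$ cannot succeed. (Your claimed mod-$p$ injectivity of the unit piece is in fact equivalent to equality in Leopoldt's Spiegelungssatz $\dim\Cl[p]^{\ov\epsilon^m}\leq\dim\Cl[p]^{\ov\epsilon^{1-m}}$, which is precisely what fails here.)

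The paper's proof avoids this by choosing the \emph{other} orientation in Ribet's lemma: take a lattice with $\ov\sigma\sim\begin{pmatrix}\mathbf{1}&*\\0&\ov\epsilon^m\end{pmatrix}$ non-split, yielding a class in $H^1(\bfQ,\bfF_p(-m))$. The key point is that ordinarity (the unramified quotient of $\ov\sigma|_{D_p}$ must be $\mathbf{1}$, forcing the extension to split over $D_p$) or Fontaine--Laffaille theory makes this class \emph{trivial} at $p$, not merely in $H^1_f$. It is therefore everywhere unramified and gives directly an element of $\Cl(\bfQ(\mu_p))[p]^{\ov\epsilon^{-m}}$, an \emph{even} eigenspace, contradicting Vandiver in one line without any Greenberg--Wiles duality or appeal to Leopoldt.
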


\begin{proof}
By \cite{Ribet76} one can find a lattice such that the mod $p$ reduction is a non-split extension of the form $\begin{pmatrix} \mathbf{1}&*\\0& \ov{\epsilon}^m \end{pmatrix}$ that is unramified away from $p$.  One can show that this extension is split at $p$. This follows from Fontaine-Laffaille theory if $\sigma$ is ``short crystalline'' as in the statement, or from ordinarity, which implies that $\sigma|_{D_p}$ has an unramified quotient. 

This implies (see e.g. \cite{Khare00} p. 275) that $p \mid  \#{\rm Cl}(\bfQ(\mu_p))(\ov{\epsilon}^{-m})$. By Vandiver's conjecture $m$ has to be odd, which implies that $\det(\sigma)$ is odd.
\end{proof}

\begin{rem}
The assumptions on the ramification of $\sigma$ can be weakened, e.g. by allowing ramification at primes $\ell$ satisfying $\ell^m \not \equiv 1 \mod{p}$. 
 At $p$ crystallinity alone is not enough
to be able to deduce that the extension is split at $p$, as the following example (pointed out to us by Kevin Buzzard) shows: Consider $p=3$ and $\sigma=\rho_{\Delta,3}$ the $3$-adic Galois representation associated to the weight 12 level 1 cuspform $\Delta$. Then $\ov{\rho}_{\Delta,3}^{\rm ss}=\mathbf{1} \oplus \ov{\epsilon}$, but the class number of $\bfQ(\mu_3)$ is one. 
\end{rem}
We will prove in Corollary \ref{cor8.3} a higher-dimensional generalization of Proposition \ref{baby} (for residually reducible polarized Galois representations of any dimension) as a consequence of the contrapositive of our main result.

Given a representation $\rho:G_{K} \to {\rm GL}_n(E)$ for $K/K^+$ a CM field and $E$ a finite extension of $\bfQ_p$ one can define two extensions of $\rho \otimes \rho^c$ to $G_{K^+}$, which we denote by ${\rm As}^{\pm}(\rho)$ (for details see section \ref{Asaidefinition}).  Our main result  Theorem \ref{general}  concerns the Bloch-Kato conjecture for these representations and generalizes our previous result in \cite{Berger15} for Asai representations of $2$-dimensional representations of an imaginary quadratic fields to this setting, including extensions of adjoint representations of polarized regular motives as studied in \cite{Harris13}.  We explore in this paper the subtle connection between polarizations, signs and criticality that this  exposes. 

To explain this we need to introduce some more notation:
Let $\Psi: G_{K^+} \to E^*$ be a Hecke character and $R:G_K \to {\rm GL}_{2n}(E)$ an irreducible $G_K$-representation such that $R^{\vee} \cong R^c \otimes \Psi|_{G_K}$ with $\ov{R}^{\rm ss}=\ov{\rho} \oplus \ov{\rho}^{c \vee} \otimes \Psi^{-1}|_{G_K}$ (i.e. $R$ is residually reducible with two summands that get swapped under the polarization). Using Shapiro's lemma we relate in Lemma \ref{Selmerres} the $K^+$-Selmer group for ${\rm As}^{\pm \Psi(c_v)}(\ov{\rho})\otimes \Psi$ to the part of its $K$-Selmer group on which the action by a complex conjugation $c_v$ for $v \mid \infty$ is given by $\pm \Psi(c_v)$. In Lemma \ref{explicit} we show that on $H^1({\rm As}^{-\Psi(c_v)}(\ov{\rho})\otimes \Psi)$ the action by $c_v$ agrees with an involution $\perp_{c_v}$ corresponding to the polarization condition satisfied by $R$. A result of \cite{BellaicheChenevierbook} implies that the involution $\perp_{c_v}$ acts on the $G_K$-extension constructed from $R$ by the sign associated to $(R, \Psi, c_v)$ (introduced in Definition \ref{signdefn}). Combining these, we get in  Theorem \ref{general} a construction of an element in the Bloch-Kato Selmer group of ${\rm As}^{\pm}(\rho)\otimes \Psi$, where the sign is given by $-\Psi(c_v) {\rm sign}(R, \Psi, c_v)$.
 In Proposition \ref{prop} we prove that this ${\rm As}^{\pm}(\rho)\otimes \Psi$ is critical in the sense of Deligne (for characters $\Psi$ where this corresponds to a twist in the left hand side of the critical strip of ${\rm As}(\rho)$) if and only if ${\rm sign}(R, \Psi, c_v)=1$.
That automorphic Galois representations $R$ are odd in this sense has been proven in \cite{BellaicheCheneviersign} for Galois representations associated to regular algebraic cuspidal polarized representations of ${\rm GL}_m$ over CM fields. We expect it to hold more generally, see the discussion in Remark \ref{r64}.

The (non-critical) $G_{K^+}$-represention ${\rm As}^{\Psi(c_v)}(\rho)\otimes \Psi$ does, in fact,  not depend on $\Psi(c_v)$. This allows us in Corollary \ref{cor8.3} to deduce total oddness of certain residually reducible polarized Galois representations if we assume that the order of the Selmer group $H^1_{\Sigma^+}(K^+, {\rm As}^{\Psi(\tilde c)}(\rho)\otimes \Psi \otimes (E/\Oo))$ is not divisible by $p$ for a fixed choice $\tilde c$ of complex conjugation.

\section*{Acknowledgement}The author would like to thank Ga\"etan Chenevier, Neil Dummigan, Kris Klosin, Jack Thorne and Jacques Tilouine for helpful conversations related to the topics of this article. The author's research was supported by the EPSRC First Grant EP/K01174X/1. The author would also  like to thank ESI, Vienna, for their hospitality during the ``Arithmetic Geometry
and Automorphic Representations" workshop in April/May 2015, where part of this research was carried out.

\section{Notation}
If $L$ is a number field then we fix algebraic closures $\ov{L}$ and $\ov{L}_v$ for every place $v$ of $L$ and compatible embeddings $\ov{L} \hookrightarrow \ov{L}_{v} \hookrightarrow \bfC$ and write $D_{L_v}$ and $I_{L_v}$ (or $I_v$ if $L$ is understood) for the corresponding decomposition and inertia subgroups of $G_{L}:={\rm Gal}(\ov{L}/L)$. 

We fix a prime  $p>2$ and denote by $\epsilon:G_{L} \to \bfZ_p^*$ the $p$-adic cyclotomic character. If $v$ is a finite place of $L$, not dividing $p$, then its value on the arithmetic Frobenius element $\epsilon(\Frob_v)=q_v$, where the latter denotes the size of the residue field of $L$ at $v$. For a $p$-adic representation $V$ of $G_L$, we write $V(n):=V \otimes \epsilon^n$ for $n\in \bfZ$. For the coefficients of our $p$-adic representations, we will take $E$ to be a (sufficiently large) finite extension of $\bfQ_p$ inside $\ov{\bfQ}_p$ with ring of integers $\Oo$ and residue field $\bfF$. We fix a choice
of a  uniformizer $\varpi$.

We fix an isomorphism $\ov{\bfQ}_p \cong \bfC$.
If $\chi: \bfA_L^*/L^* \to \bfC^*$ is a Hecke character of type $(A_0)$ then we write $\chi^{\rm gal}$ for the associated Galois character $G_L \to \ov{\bfQ}_p^*$ (for details see e.g. \cite{Thorne15}, but we use the local Artin reciprocity maps mapping uniformizers to arithmetic Frobenius elements).

Let  $K/K^+$ be a CM field, and write $\chi_{K/K^+}:G_{K^+} \to \{ \pm 1\}$ for the corresponding quadratic character. We fix a complex conjugation $\tilde c \in G_{K^+}$. For a $G_K$-representation $(\rho, V)$ we define the conjugate representation $(\rho^c, V^c)$ by $\rho^c(g)=\rho(\tilde c g \tilde c)$.

\section{Asai $L$-function and tensor induction} \label{Asaidefinition}
We describe the definition of the Asai $L$-function of  a cuspidal automorphic representation $\pi$ for ${\rm GL}_m(K)$ for a CM field $K/K^+$ (adapting the treatment for $m=2$ in \cite{Krishna12}, see also \cite{Ramak04} Section 6). The Langlands dual of the algebraic group $R_{K/K^+} {\rm GL}_m$ is given by ${}^L(R_{K/K^+} {\rm GL}_m)=({\rm GL}_m(\bfC) \times {\rm GL}_m(\bfC)) \rtimes G_{K^+}$. There are $m^2$-dimensional representations
$$r^{\pm}:{}^L(R_{K/K^+} {\rm GL}_m/K)(\bfC)\to {\rm GL}(\bfC^m \otimes \bfC^m)$$  given by $$r^{\pm}(g,g',\gamma)(x \otimes y)=g(x) \otimes g'(y) \text{ for } \gamma|_K=1$$ and $$r^{\pm}(1,1,\gamma)(x \otimes y)=\pm y \otimes x.\text{ for } \gamma|_K\neq1$$ For each place $v$ we denote the local $L$-group homomorphisms obtained from $r^{\pm}$ by restriction by $r^{\pm}_v$.

Let $\pi$  be a cuspidal automorphic representation for ${\rm GL}_m(\bfA_K)$. Then we may consider $\pi$ as a representation of $R_{K/K^+} {\rm GL}_m(\bfA)$ and factorize it as a restricted tensor product $$\pi=\otimes_v \pi_v,$$ where each $\pi_v$ is an irreducible admissible representation of ${\rm GL}_m(K \otimes_{K^+} K^+_v)$, with corresponding $L$-parameter $\phi_v: W'_{K^+_v} \to ({\rm GL}_m(\bfC) \times {\rm GL}_m(\bfC)) \rtimes G_{K^+_v}$. Let ${\rm As}^{\pm}(\pi_v)$ now be the irreducible admissible representation of ${\rm GL}_{m^2}(K^+_v)$ corresponding to the parameter $r^{\pm}_v \circ \phi_v$ under the local Langlands correspondence and put ${\rm As}^{\pm}(\pi):=\otimes_v {\rm As}^{\pm}(\pi_v)$. By Langlands functoriality these should be  isobaric automorphic representations of ${\rm GL}_{m^2}(\bfA)$, the $\pm$ Asai transfers  of $\pi$. For $m=2$ this has been proven by Krishnamurty \cite{Krishna03} and Ramakrishnan \cite{Ramak02}. 

We will, however, only be referring to the corresponding Langlands $L$-function, which  is  defined by $$L(s, \pi, r^{\pm})=\prod_v L(s,\pi_v, r^{\pm}_v)$$ with $L(s,\pi_v, r^{\pm}_v)=L(s, r^{\pm} \circ \phi_v)$.  For unramified $v$ and $\pi_v$ spherical the latter is given by $$\det(I-r^{\pm}(A(\pi_v)){\rm Nm}(v)^{-s})^{-1},$$ where $A(\pi_v)$ is the semisimple conjugacy class (Satake parameter) in ${}^L(R_{K/K^+} {\rm GL}_m)(\bfC)$ (which is represented by $(\tilde A(\pi_v), \Frob_v)$ for $\tilde A(\pi_v) \in \widehat{R_{K/K^+} {\rm GL}_m}$). For more details and its analytic properties we refer the reader to \cite{GrbacShahidi}.

We will also consider the twisted Asai $L$-function $L(s, \pi, r^{\pm} \otimes \Psi)$ for $\Psi: G_{K^+} \to \ov{\bfQ}_p^*$. For unramified places its Euler factors are given by  $$\det(I-\Psi(\Frob_v) r^{\pm}(A(\pi_v)){\rm Nm}(v)^{-s})^{-1}.$$

The  operation for Galois representations corresponding to the Asai transfer is usually called tensor (or multiplicative) induction: Consider a representation $\rho:G_K \to {\rm GL}(V)$ for an $n$-dimensional vector space $V$ over a field. Then we define the following  extensions of $V \otimes V^c$ to $G_{K^+}$: 

$${\rm As}^{\pm}(\rho):G_{K^+} \to {\rm GL}(V \otimes V^c)$$  given by $${\rm As}^{\pm}(\rho)(g)(x \otimes y)=\rho(g)x \otimes \rho^c(g)(y) \text{ for } g|_K=1$$ and $${\rm As}^{\pm}(\rho)(\tilde c)(x \otimes y)=\pm y \otimes x.$$ 
We first note that ${\rm As}^{-}(\rho)\cong {\rm As}^{+}(\rho) \otimes \chi_{K/K^+}$. We quote further properties from \cite{Prasad92} Lemma 7.1:

\begin{lemma}[Prasad] \label{lemPras}
\begin{enumerate}
\item For representations $\rho_1$ and $\rho_2$ of $G_K$, $${\rm As}^{+}(\rho_1 \otimes \rho_2)={\rm As}^{+}(\rho_1) \otimes {\rm As}^{+}(\rho_2).$$
\item $${\rm As}^{\pm}(\rho^{\vee}) \cong {\rm As}^{\pm}(\rho)^{\vee}$$
\item For a one-dimensional representation $\chi$ of $G_K$, ${\rm As}^{+}(\chi)$ is the one-dimensional representation of $G_{K^+}$ obtained by composing $\chi$ with the transfer map from $G_{K^+}^{\rm ab}$ to $G_K^{\rm ab}$.
\end{enumerate}
\end{lemma}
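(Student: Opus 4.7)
The plan is to verify each of the three properties by a direct unwinding of the definition of ${\rm As}^{\pm}$ given earlier in the section. In each case the underlying vector space on either side of the desired isomorphism admits a canonical identification, and one checks separately that the action of $g\in G_K$ matches and that the action of the complex conjugation $\tilde c$ matches.

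For (1), I would identify the underlying space of ${\rm As}^+(\rho_1\otimes\rho_2)$, namely $(V_1\otimes V_2)\otimes (V_1\otimes V_2)^c$, with the underlying space of ${\rm As}^+(\rho_1)\otimes {\rm As}^+(\rho_2)$, namely $(V_1\otimes V_1^c)\otimes (V_2\otimes V_2^c)$, by swapping the middle two tensor factors. On $G_K$ both sides act diagonally as $\rho_1(g)\otimes\rho_2(g)\otimes\rho_1^c(g)\otimes\rho_2^c(g)$ and the identification is clearly equivariant. At $\tilde c$, the left side performs a single swap of the two big factors, while the right side performs a swap inside each of the two pairs; under the middle-factor transposition these two operations agree, and crucially the composition of two swaps produces no extra sign, which is why the identity holds for the $+$ version.

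For (2), I would use the natural identification $(V\otimes V^c)^\vee\cong V^\vee\otimes (V^\vee)^c$ obtained by dualizing each tensor factor. The $G_K$-action on ${\rm As}^{\pm}(\rho^\vee)$ and ${\rm As}^{\pm}(\rho)^\vee$ manifestly agree under this identification, and the swap operator is self-dual (it is an involution of order two exchanging the two tensor factors), so the $\tilde c$-actions also match with the same sign $\pm$.

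For (3), if $\chi$ is one-dimensional then $V\otimes V^c$ is one-dimensional and the swap is the identity, so ${\rm As}^+(\chi)(\tilde c)=+1$ and ${\rm As}^+(\chi)(g)=\chi(g)\chi^c(g)=\chi(g\cdot \tilde c g\tilde c^{-1})$ for $g\in G_K$. I would then compare with the standard formula for the transfer map ${\rm Ver}\colon G_{K^+}^{\rm ab}\to G_K^{\rm ab}$ computed using coset representatives $\{1,\tilde c\}$: for $g\in G_K$ one gets ${\rm Ver}(g)=g\cdot(\tilde c g\tilde c^{-1})$, and for $\tilde c$ one gets ${\rm Ver}(\tilde c)=\tilde c^2$, which is trivial (recall $\tilde c$ is a complex conjugation, hence of order two). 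The hardest book-keeping step is this last one, matching coset conventions in the definition of ${\rm Ver}$ with the ordering of factors in the tensor induction; once that is set up consistently, the equality $\chi\circ{\rm Ver}={\rm As}^+(\chi)$ is immediate. In fact, all three parts are special cases of standard properties of tensor induction, and a uniform proof can be given by viewing ${\rm As}^+$ as the tensor-induction functor from the index-two subgroup $G_K\subset G_{K^+}$.
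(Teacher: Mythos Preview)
Your argument is correct. The paper itself does not give a proof of this lemma at all: it simply quotes the statement from \cite{Prasad92}, Lemma~7.1, so there is no ``paper's own proof'' to compare against. Your direct verification by unwinding the definition of ${\rm As}^{\pm}$ is exactly the standard way one checks these facts, and each step (the middle-factor swap for (1), self-duality of the transposition for (2), and the coset computation of the transfer for (3), using that the fixed complex conjugation $\tilde c$ has order two so ${\rm Ver}(\tilde c)=\tilde c^2=1$) is accurate. Your closing remark that all three are instances of general properties of tensor induction from an index-two subgroup is also the right conceptual framing and is essentially how Prasad presents it.
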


We also recall from \cite{Krishna12} p.1362-3 how $r^{\pm}$ and ${\rm As}^{\pm}$ are related:
Given $\sigma: G_K \to {\rm GL}_m(\bfC)$ we can define a homomorphism $$\tilde \sigma: G_{K^+} \to {}^L(R_{K/K^+} {\rm GL}_m)(\bfC)=({\rm GL_m}(\bfC) \times {\rm GL}_m(\bfC)) \rtimes G_K$$ as follows:

$$\tilde \sigma(g)= \begin{cases} (\sigma(g), \sigma(\tilde cg\tilde c^{-1}), g) & \text{ if } g \in G_K\\ (\sigma(g\tilde c^{-1}), \sigma(\tilde cg), g) & \text{ if } g \notin G_K.\end{cases}$$

It is now easy to check that $${\rm As}^{\pm}(\sigma)=r^{\pm} \circ \tilde \sigma:G_{K^+} \to {\rm GL}_{m^2}(\bfC).$$

\section{Relating Selmer groups over $K$ and $K^+$}
In this section we record how to relate Bloch-Kato Selmer groups over $K$ and $K^+$: 

Let $L$ either be $K$ or $K^+$.
Consider a  continuous finite-dimensional representation $V$ of $G_{L}$ over $E$. 
 Let $T \subseteq V$ be a
$G_{L}$-stable $\Oo$-lattice and put $W=V/T$ and $W_n=\{x \in W: \varpi^nx=0\}$.
Let $\Sigma$ be a finite set of places of $L$. Following Bloch-Kato (see also \cite{DiamondFlachGuo04} Section 2.1) we define for $M=W, W_n$ the following Selmer
groups:

	\be H^1_{\Sigma}(L,M)={\rm ker}(H^1(L,M) \to \prod_{v \notin \Sigma} H^1(L_v,M)/H^1_f(L_v,M)),\ee where 
$H^1_f(L_v, W):={\rm im}(H^1_f(L_v,V) \to H^1(L_v,W))$ and 
$$H^1_f(L_v,V)=\begin{cases} {\rm ker}(H^1(L_v,V) \to H^1(I_v, V)) & \text{ if } v \nmid p, \infty\\ {\rm ker}(H^1(L_v,V) \to H^1(L_v, V \otimes B_{\rm cris})) & \text{ if } v \mid p\\ 0 & \text{ if } v \mid \infty. \end{cases}$$
When $V$ is short crystalline (i.e. restrictions to $D_v$ for $v \mid p$ described by Fontaine-Laffaille theory as in \cite{DiamondFlachGuo04} Section 1.1.2, but with the more restrictive filtration condition of  \cite{CHT} Section 2.4.1) we refer the reader to \cite{DiamondFlachGuo04} Section 2.1 and \cite{BergerKlosin13} Section 5 for the definition of the local subgroups $H^1_f(L_v, W_n)$ for finite modules using Fontaine-Laffaille theory.

\begin{lemma} \label{Selmerres}
Let $V$ be a $G_{K^+}$-representation with $T$ and $W$ as above. Let $\Sigma^+$ be a finite set of places of $K^+$ and $\Sigma$ the set of places of $K$ above  $\Sigma^+$.  For $p>2$ we have $$H^1_{\Sigma}(K,W) = H^1_{\Sigma}(K,W)^+ \oplus  H^1_{\Sigma}(K,W)^-,$$ where the superscripts indicate the eigenspaces for the action of $c \in {\rm Gal}(K/K^+)$.

Furthermore the restriction map from $G_{K^+}$ to $G_K$ induces isomorphisms $$H^1_{\Sigma^+}(K^+,W) \overset{\sim}{\to} H^1_{\Sigma}(K,W)^+$$ and $$H^1_{\Sigma^+}(K^+,W \otimes \chi_{K/K^+}) \overset{\sim}{\to} H^1_{\Sigma}(K,W)^-.$$

\end{lemma}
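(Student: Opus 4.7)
The plan is to derive the decomposition in part (1) from the order-two action of complex conjugation using $p > 2$, then to obtain the two isomorphisms in part (2) by inflation-restriction for $G_K \subset G_{K^+}$, and finally to verify compatibility with the local Bloch--Kato conditions place by place.

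For part (1), the group $\Gal(K/K^+) = \langle c \rangle$ acts naturally on $H^1(K, W)$ via outer conjugation, and since $2 \in \Oo^\times$ the idempotents $(1 \pm c)/2$ split it into the $\pm 1$-eigenspaces $H^1(K, W)^\pm$. The set $\Sigma$ is $c$-stable by construction, and complex conjugation carries each local condition $H^1_f(K_w, W)$ isomorphically to $H^1_f(K_{c(w)}, W)$ via the induced isomorphism of completions. Hence $H^1_\Sigma(K, W)$ is preserved by $c$ and the same direct-sum decomposition descends to it.

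For the cohomological content of part (2), I will apply inflation-restriction to the extension $1 \to G_K \to G_{K^+} \to \Gal(K/K^+) \to 1$. For any $p$-primary discrete $G_{K^+}$-module $M$ one obtains
\[
0 \to H^1(\Gal(K/K^+), M^{G_K}) \to H^1(K^+, M) \xrightarrow{\restr} H^1(K, M|_{G_K})^{\Gal(K/K^+)} \to H^2(\Gal(K/K^+), M^{G_K}),
\]
and since the outer $p$-primary terms are killed by $2$ they vanish, so $\restr$ identifies $H^1(K^+, M)$ with the $c$-invariants of $H^1(K, M|_{G_K})$. Taking $M = W$ gives $H^1(K^+, W) \cong H^1(K, W)^+$, because there the two $c$-actions agree. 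Taking $M = W \otimes \chi_{K/K^+}$, the canonical identification $(W \otimes \chi_{K/K^+})|_{G_K} = W|_{G_K}$ intertwines the $\Gal(K/K^+)$-actions up to the sign $\chi_{K/K^+}(c) = -1$ (a short cocycle computation), and the invariants accordingly correspond to $H^1(K, W)^-$.

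It remains to match the local conditions under $\restr$. At archimedean places this is automatic since $H^1_f$ is defined to be zero there. At a finite $v \notin \Sigma^+$ that splits in $K$, $K^+_v \cong K_w$ and the condition transports trivially. At a non-split finite $v$ with unique place $w$ above, the same inflation-restriction argument applied to $1 \to G_{K_w} \to G_{K^+_v} \to \Gal(K_w/K^+_v) \to 1$ identifies $H^1(K^+_v, V)$ with the $\Gal(K_w/K^+_v)$-invariants in $H^1(K_w, V)$; running the argument also for the inertia groups when $v \nmid p$, and for $V \otimes B_{\tucris}$ (which depends only on the completion) when $v \mid p$, yields $H^1_f(K^+_v, V) \xrightarrow{\sim} H^1_f(K_w, V)^{\Gal(K_w/K^+_v)}$, and pushing to $W$ produces the required local compatibility; the twist by $\chi_{K/K^+}$ is handled identically, with the sign flip only affecting which eigenspace is cut out. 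The main point I expect to need care with is the $v \mid p$ case, where one relies on the standard functoriality of Fontaine-Laffaille/crystalline theory under the degree-at-most-two extension $K^+_v \hookrightarrow K_w$.
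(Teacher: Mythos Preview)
Your proof is correct, but the paper takes a different route. Rather than inflation--restriction, the paper invokes Shapiro's lemma $H^1(K,M) \cong H^1(K^+, \ind_K^{K^+} M)$ together with the decomposition $\ind_K^{K^+}(V|_{G_K}) \cong V \oplus (V \otimes \chi_{K/K^+})$, citing \cite{SkinnerUrban14} for the compatibility of the Shapiro isomorphism with the local conditions at finite places; the $v \mid p$ case is then handled uniformly via the projection formula $\ind_{K_w}^{K^+_v}(V \otimes B_{\tucris}|_{G_{K_w}}) \cong \ind_{K_w}^{K^+_v}(V) \otimes B_{\tucris}$.

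Your inflation--restriction argument is more elementary and entirely self-contained, at the cost of the place-by-place casework you carry out (archimedean, split, inert/ramified, $v \mid p$). The paper's Shapiro-plus-projection-formula approach is more uniform and conceptual but leans on the external reference for the local compatibility. Either method works; the two are essentially Frobenius-reciprocity duals of one another.
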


\begin{proof}
For this we note that ${\rm ind}_K^{K^+}(V|_{G_K})=V \oplus V \otimes \chi_{K/K^+}$  and recall Shapiro's lemma which for a finite Galois extension of fields $E/F$ tells us that $$H^1(E,M)\cong H^1(F,{\rm ind}_E^F(M))$$ for  any continuous $p$-adic $G_F$-representation $M$ (see \cite{SkinnerUrban14} 3.1.1 and 3.1.2 for a detailed discussion of Shapiro's lemma in the case that $M$ is a discrete module and the compatibility with restrictions at finite places and \cite{SkinnerUrban14} Lemma 3.1 for the analogous statement for the Greenberg Selmer groups in the case of ordinary representations). The statement for  Bloch-Kato Selmer groups follows easily from this and the projection formula $${\rm ind}_{K_w}^{K^+_v} (V \otimes B_{\rm cris}|_{G_{K_w}}) \cong {\rm ind}_{K_w}^{K^+_v} (V) \otimes B_{\rm cris},$$ where $w$ is a place of $K$ lying over a place $v$ of $K^+$, which in turn lies over $p$.
\end{proof}

\section{Polarized Galois representations and signs}

We recall a class of automorphic representations for which Galois representations have been constructed. We follow the exposition in \cite{Thorne15} Section 2 (except for taking the arithmetic Frobenius normalisation when associating the Galois representation):

\begin{definition}
We say that a pair $(\pi, \chi)$ of an automorphic representation $\pi$ of ${\rm GL}_m(\bfA_K)$ and a continuous character $\chi: (K^+)^* \backslash \bfA_{K^+}^* \to \bfC^*$ is RAECSDC (regular, algebraic, essentially conjugate self-dual, cuspidal) if it satisfies the following properties:
\begin{enumerate}
\item $\pi$ is cuspidal.
\item $\pi^{\vee} \cong \pi^c \otimes \chi \circ {\rm Nm}_{K/K^+}$.
\item $\chi_v(-1)=(-1)^m$ for each place $v \mid \infty$ of $K^+$.
\item The infinitesimal character of $\pi_{\infty}$ agrees with the infinitesimal character of an algebraic representation of the group ${\rm Res}_{K/\bfQ} {\rm GL}_m$.
\end{enumerate}
We say that an automorphic representation $\pi$ of $\GL_m(\bfA_K)$ is RACSDC if it satisfies these conditions with $\chi=\chi_{K/K^+}^m$.
\end{definition}
 
\begin{thm} [\cite{Thorne15} Theorem 2.2, culmination of work of many people, see introduction to \cite{ChenevierHarris} and \cite{Cariani2012}, \cite{Cariani2014}] Let $(\pi, \chi)$ be a RAECSDC automorphic representation of ${\rm GL}_m(\bfA_K)$. Then there exists a continuous semisimple representation 
$$\rho_{\pi}:G_K \to \GL_m(\ov{\bfQ}_p)$$ such that
\begin{enumerate}[(a)]
\item $\rho_{\pi}^{\vee} \cong \rho_{\pi}^c  \otimes \epsilon^{1-m} \chi^{\rm gal}|_{G_K}$.
\item $\rho_{\pi}|_{D_v}$ is de Rham for each $v \mid p$.
\item For each finite place $v$ of $K$, we have $${\rm WD}(\rho_{\pi}|_{G_{K_v}})^{\rm F-ss} \cong {\rm rec}^{\vee}_{K_v}(\pi_v \otimes | \det |^{(1-m)/2}),$$ where ${\rm rec}^{\vee}_{K_v}$ is the local Langlands correspondence for $\GL_m(K_v)$ using the arithmetic Frobenius normalisation.
\end{enumerate}
\end{thm}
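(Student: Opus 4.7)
The plan is to follow the by now standard automorphic-to-Galois strategy: descend $\pi$ to a unitary group, realize the desired representation in the $\ell$-adic cohomology of an associated Shimura variety, and handle the remaining cases by $p$-adic interpolation, with local--global compatibility addressed by a separate and more delicate analysis.

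First I would use the essentially conjugate self-duality hypothesis $\pi^{\vee}\cong \pi^{c}\otimes \chi\circ \Nm_{K/K^+}$ to descend $\pi$, via the stable base change results of Labesse (and their refinements by Mok and by Kaletha--Minguez--Shin--White), to an automorphic representation $\Pi$ of the quasi-split unitary group $U_m^{*}$ over $K^+$. The algebraicity and regularity imposed on the infinitesimal character of $\pi_{\infty}$ guarantee that $\Pi_{\infty}$ is cohomological, so after transferring to a suitable inner form $G = GU_m$, compact at all but one infinite place and carrying a PEL-type Shimura datum, one obtains a contribution of $\Pi$ to the \'etale cohomology of a Kottwitz-style Shimura variety $\mathrm{Sh}(G,X)$.

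Under the simplifying hypothesis that $\pi$ is square-integrable at some finite place, the Galois representation $\rho_{\pi}$ is then cut out of $H^{*}_{\text{\'et}}(\mathrm{Sh}(G,X)_{\ov{K^+}},\mathcal{L}_{\mu})$ for an appropriate automorphic local system $\mathcal{L}_{\mu}$ by the Hecke eigensystem attached to $\Pi$, following work of Kottwitz, Clozel, Harris--Taylor, Morel, and Shin. Property (a) then follows from Poincar\'e duality combined with the unitary polarization on the Shimura variety, and the de Rham property in (b) at places of unramified reduction is immediate from the comparison theorems applied to smooth proper varieties. To remove the square-integrability hypothesis and reach the level of generality stated here, I would invoke the eigenvariety argument of Chenevier--Harris: embed the sought-after $\rho_{\pi}$ as a specialization of a continuous pseudorepresentation interpolated over an appropriate eigenvariety, realize $\rho_{\pi}$ as a $p$-adic limit of representations produced by the direct construction above, and verify that the polarization (a), semisimplicity, and unramifiedness outside a finite set all pass to the limit.

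The main obstacle, and the most subtle part of the proof, is establishing local--global compatibility (c) in its full strength --- including the monodromy operator at finite places $v\nmid p$ and the de Rham property at arbitrary $v\mid p$, including ramified primes. This is the content of Caraiani's work \cite{Cariani2012,Cariani2014}, and it requires a detailed analysis of the nearby cycles on integral models of unitary Shimura varieties at places of bad reduction together with refined techniques in $p$-adic Hodge theory. Once those inputs are available, assembling (a), (b), and (c) into the statement of the theorem is essentially formal.
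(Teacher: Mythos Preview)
The paper does not give a proof of this theorem: it is quoted as a known result, attributed to \cite{Thorne15} Theorem 2.2 and described as the culmination of work of many people, with pointers to \cite{ChenevierHarris}, \cite{Cariani2012}, \cite{Cariani2014}. There is therefore no ``paper's own proof'' to compare your proposal against.

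That said, your outline is a faithful high-level sketch of the strategy developed in those references: descent to a unitary group via base change, realization in the cohomology of PEL Shimura varieties under auxiliary hypotheses, removal of those hypotheses by the $p$-adic interpolation/eigenvariety argument of Chenevier--Harris, and the local--global compatibility results of Caraiani for parts (b) and (c). So as a summary of the literature proof your proposal is on target, but for the purposes of this paper the correct ``proof'' is simply a citation.
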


\begin{rem}
\begin{enumerate}
\item A standard argument using the Baire category
theorem shows that each of the representations is valued in some  finite extension of $\bfQ_p$ (denoted for simplicity by $E$ in the following, suppressing the dependency on $\pi$).
\item
In certain cases, Galois representations have been constructed for irregular $\pi$. See, in particular, the recent papers of  \cite{PilloniStroh} and \cite{GoldringK} in the case that the descent of $\pi$ to a unitary group has a non-degenerate limit of discrete series as archimedean component.
\item For general regular $L$-algebraic cuspidal automorphic representation $\pi$ of $\GL_m(\bfA_K)$ \cite{HLTT} and \cite{Scholze15} have constructed Galois representations $\rho_{\pi} :G_K \to \GL_m(E)$ such that its Frobenius eigenvalues match the Hecke eigenvalues of $\pi$. These are also expected to satisfy local-global compatibility.
\end{enumerate}
\end{rem}

\subsection{Polarizations and signs}
Recall that $K/K^+$ is a CM quadratic extension and $E$ a finite extension of $\bfQ_p$.


\begin{definition}
Let $\Psi: G_{K^+} \to E^*$ be a character 
and $R: G_K \to \GL_m(E)$ an absolutely irreducible representation. We call $(R,\Psi)$ \emph{polarized} if $$R^{\vee} \cong R^c \otimes \Psi|_{G_K}.$$
\end{definition}

In this section we consider complex conjugations $c_v \in G_{K^+}$ for $v \mid \infty$ and conjugate representations 
$R^{c_v}$ defined by $R^{c_v}(g)=R(c_v g c_v)$ (which are all isomorphic to $R^c$ defined using our fixed complex conjugation $\tilde c$).
By Schur's Lemma there exists a matrix $A_v \in \GL_m(\ov{\bfQ}_p)$, unique up to scalar, such that $R^{\vee}=A_vR^{c_v} A_v^{-1} \Psi|_{G_K}$, depending on $v \mid \infty$. This implies that $$R^{c_v}=(A_v^T)^{-1} R^{\vee} A_v^T \Psi^{-1}|_{G_K},$$ using $(\Psi|_{G_K})^{c_v}=\Psi|_{G_K}$. One can then apply Schur's Lemma again (see e.g. \cite{CalegariPoincare}) to deduce that the matrix $A_v$ is in fact symmetric or antisymmetric. 

\begin{definition} \label{signdefn}
Let $(R,\Psi)$ be polarized. If the matrix $A_v$ described above is symmetric we call $(R,\Psi)$ odd at $v$ 
(or $R$ odd at $v$ with respect to $\Psi$) and define ${\rm sign}(R,\Psi, c_v)=1$ (otherwise we put ${\rm sign}(R,\Psi, c_v)=-1$). If $(R,\Psi)$ is odd for all $v \mid \infty$ we call $(R,\Psi)$ \emph{totally odd}.
\end{definition}

\begin{lemma}
The signs  ${\rm sign}(R,\Psi, c_v)$ are independent of $v \mid \infty$ if and only if the values $\Psi(c_v)$ are.
\end{lemma}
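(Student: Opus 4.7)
The plan is to compare the two polarizing matrices $A_v$ and $A_{v'}$ directly by tracking how they are related via a suitable element of $G_K$ connecting the complex conjugations $c_v$ and $c_{v'}$. Fix two infinite places $v, v'$ of $K^+$ and set $h := c_v c_{v'}$. Since $c_v$ and $c_{v'}$ both restrict to the non-trivial element of $\Gal(K/K^+)$, we have $h \in G_K$, and since $c_{v'}$ is an involution, expanding $(c_v h)^2 = 1$ gives the crucial relation $c_v h c_v = h^{-1}$. A direct computation then shows that for every $g \in G_K$,
$$R^{c_{v'}}(g) = R(c_v h g h^{-1} c_v) = R(h)^{-1} R^{c_v}(g)\, R(h),$$
where in the last equality I used $c_v h c_v = h^{-1}$.

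Next I would feed this into the two polarization identities $R^{\vee} = A_v R^{c_v} A_v^{-1} \Psi|_{G_K}$ and $R^{\vee} = A_{v'} R^{c_{v'}} A_{v'}^{-1} \Psi|_{G_K}$. Equating them and substituting the expression for $R^{c_{v'}}$, the matrix $C := R(h) A_{v'}^{-1} A_v$ commutes with $R^{c_v}$, hence by Schur's Lemma (since $R$, and therefore $R^{c_v}$, is absolutely irreducible) $C = \lambda I$ for some scalar $\lambda \in E^*$. This yields the clean formula
$$A_{v'} = \lambda^{-1} A_v R(h).$$

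The remaining step is a transpose calculation. Evaluating the polarization $R(g)^{-T} = A_v R(c_v g c_v) A_v^{-1} \Psi(g)$ at $g = h$ and using $c_v h c_v = h^{-1}$ gives
$$R(h)^T = \Psi(h)^{-1} A_v R(h) A_v^{-1}.$$
Writing $A_v^T = \epsilon_v A_v$ with $\epsilon_v = \mathrm{sign}(R,\Psi,c_v) \in \{\pm 1\}$, and similarly for $v'$, then transposing $A_{v'} = \lambda^{-1} A_v R(h)$ and comparing it with $A_{v'}^T = \epsilon_{v'} A_{v'}$ yields, after substituting the displayed formula for $R(h)^T$,
$$\epsilon_{v'} = \epsilon_v \cdot \Psi(h)^{-1} = \epsilon_v \cdot \Psi(c_v)\Psi(c_{v'}),$$
using $\Psi(c_v)^2 = \Psi(c_{v'})^2 = 1$ (which holds because $c_v, c_{v'}$ are involutions and $p>2$). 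Since $\Psi(c_v), \Psi(c_{v'}) \in \{\pm 1\}$, this last equation shows $\epsilon_{v'} = \epsilon_v$ if and only if $\Psi(c_v) = \Psi(c_{v'})$, proving the lemma.

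I do not expect any serious obstacle here: the only subtlety is keeping track of the correct conjugation conventions (making sure $c_v h c_v = h^{-1}$ is applied on the right side), and ensuring that Schur's Lemma is invoked for the absolutely irreducible representation $R^{c_v}$ rather than for $R$ itself. Everything else is a bookkeeping calculation with transposes.
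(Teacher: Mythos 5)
Your proof is correct and follows essentially the same approach as the paper: both rely on the relation $A_{v'}\propto A_v R(c_v c_{v'})$ (which you derive via Schur's Lemma while the paper asserts it) and then a transpose computation using the polarization identity to conclude ${\rm sign}(R,\Psi,c_{v'})={\rm sign}(R,\Psi,c_v)\Psi(c_vc_{v'})$. Your bookkeeping is careful and fills in details the paper leaves implicit; in particular, where the paper writes $A_{\tilde c'}=A_{\tilde c}R(\tilde c'\tilde c)$ you obtain the correct $A_{v'}\propto A_v R(c_v c_{v'})$, but since $\Psi(\tilde c\tilde c')=\Psi(\tilde c'\tilde c)\in\{\pm1\}$ this apparent discrepancy has no bearing on the conclusion.
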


\begin{proof} For $R^{\tilde c'}$ the matrix $A_{\tilde c'}$ equals $A_{\tilde c} R(\tilde c' \tilde c)$. Since
\begin{eqnarray*}\left(A_{\tilde c} R(\tilde c' \tilde c) \right)^T&=&R(\tilde c' \tilde c)^T A_{\tilde c}^T=R^{\vee}(\tilde c \tilde c') A_{\tilde c}^T=A_{\tilde c} R(\tilde c' \tilde c)A_{\tilde c}^{-1} A_{\tilde c}^T \Psi(\tilde c \tilde c')\\  &=&A_{\tilde c} R(\tilde c' \tilde c) \cdot {\rm sign}(R,\Psi, \tilde c) \Psi(\tilde c \tilde c') \end{eqnarray*} we deduce that $${\rm sign}(R,\Psi, \tilde c')={\rm sign}(R,\Psi, \tilde c) \Psi(\tilde c \tilde c').$$

\end{proof}

\begin{rem}
This lemma shows that our notion of ``totally odd polarized'' agrees with that in \cite{BLGGT} Section 2, except that we do not demand that $\Psi(c_v)=-1$ for all $v \mid \infty$, which can, however,  easily be achieved by replacing $\Psi$ by $\Psi \chi_{K/K^+}$.
\end{rem}

\begin{example}[Essentially self-dual $G_{K^+}$-representation] \label{ex5.8}
\begin{enumerate}
\item
Let $\tilde R: G_{K^+} \to \GL_m(E)$ absolutely irreducible such that $$\tilde R^{\vee} \cong \tilde R \otimes \Psi.$$ This implies that $\tilde R$  is either (generalized) symplectic or orthogonal,  i.e. that there exists a sympletic or orthogonal pairing for the vector space underlying $\tilde R$. 
\cite{CalegariPoincare} Lemma 2.6 shows that $(\tilde R|_{G_K}, \Psi)$ is odd at $v$ either if $\tilde R$ is symplectic and $\Psi(c_v)=-1$ or if $\tilde R$ is orthogonal and $\Psi(c_v)=1$.

In particular, this shows that the notion of oddness in Definition \ref{signdefn} generalizes the notion of oddness for $2$-dimensional representations $\rho:G_{K^+} \to \GL_2(\ov{\bfQ_p})$ requiring $\det(\rho)(\tilde c)=-1$.
\item Consider a $2$-dimensional irreducible representation $R:G_K \to \GL_2(\ov{\bfQ}_p)$ such that  $R^{\vee} \cong R^c \otimes \det(R)^{-1}$ and ${\rm det} \, R$ extends to $G_{K^+}$. Then $R^c \cong R$, so $R$ extends to $G_{K^+}$. Let $\tilde R: G_{K^+}  \to \GL_2(\ov{\bfQ}_p)$ be either of the two extensions. This satisfies $\tilde R^{\vee} \cong \tilde R \otimes \det(\tilde R)^{-1}$ (but not $\tilde R^{\vee} \cong \tilde R \otimes \det(\tilde R)^{-1} \chi_{K/K^+}$ since $\tilde R \not \cong \tilde R \otimes \chi_{K/K^+}$). This means that ${\rm sign}(R)=1$ is again equivalent to the oddness of the polarization character $\det(\tilde R)$. 
\end{enumerate}
\end{example}

\begin{thm}[Bella{\"{\i}}che-Chenevier \cite{BellaicheCheneviersign},\cite{BLGGT} Theorem 2.1.1(1)] \label{BCsign}
Let $(\pi,\chi)$ be a RAECSDC representation of $\GL_m(\bfA_K)$. Then (any irreducible factor of) $\rho_{\pi}$ is totally odd with respect to $\epsilon^{1-m} \chi^{\rm gal}$.
\end{thm}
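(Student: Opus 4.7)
The plan is to follow the original argument of Bella{\"{\i}}che--Chenevier \cite{BellaicheCheneviersign}, which exploits $p$-adic analytic families on unitary group eigenvarieties. First I would reduce to a single irreducible factor $R$ of $\rho_\pi$, so that Schur's lemma produces the intertwining matrix $A_v$ of Section~5 and a well-defined sign $\mathrm{sign}(R, \Psi, c_v) \in \{\pm 1\}$ for $\Psi = \epsilon^{1-m}\chi^{\mathrm{gal}}$. Using the identity $\mathrm{sign}(R,\Psi,c_{v'}) = \mathrm{sign}(R,\Psi,c_v)\, \Psi(c_v c_{v'})$ established earlier in the paper, it suffices to verify oddness at a single archimedean place.

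Next, after a harmless solvable base change and after choosing a prime $p$ splitting appropriately in $K/K^+$, one descends $\pi$ via cyclic base change to an automorphic representation $\pi^\flat$ on a totally definite unitary group $U/K^+$ attached to $K/K^+$ (of signature $(m,0)$ at each archimedean place), so that $\pi^\flat$ corresponds to $\pi$ via quadratic base change. Choosing a refinement at $p$, the representation $\pi^\flat$ defines a classical point $x_0$ on the Coleman--Mazur-style eigenvariety $\mE_U$ of $U$. By Chenevier's construction, over a suitable irreducible component through $x_0$ there is a continuous $m$-dimensional polarized pseudocharacter (or determinant) $D: G_K \to \mO(\mE_U)$ interpolating the polarized Galois representations at classical points, together with an intertwining datum whose symmetry type is, by definition, a locally constant function on the reduced rigid-analytic space.

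Consequently, the sign at an archimedean place $v$, viewed as a function on $\mE_U$, is constant on the irreducible component of $x_0$, and it suffices to verify that it equals $+1$ at some other accessible classical point of that component. Density results of Chenevier and Bella{\"{\i}}che--Chenevier permit accumulation at classical points arising from endoscopic or theta-series constructions---for instance, points where the associated Galois representation is a direct sum of characters induced from Hecke characters of a CM extension of $K^+$. At such a point Lemma \ref{lemPras}(3) combined with Example \ref{ex5.8}(1) yields a direct computation: the intertwining matrix is block-symmetric with blocks determined by the inducing data, and the parity condition $\chi_v(-1) = (-1)^m$ in the definition of RAECSDC forces the total sign to be $+1$.

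The main obstacle is the rigidity statement---that the symmetry type of the polarization of a family of pseudocharacters is locally constant on the eigenvariety---together with the density of ``computable'' classical points on each irreducible component containing $\pi^\flat$. The first is the central technical innovation of Bella{\"{\i}}che--Chenevier and is handled by working with Chenevier's determinants and a family version of Schur's lemma; the second requires an accumulation argument for classical points of varying weights, for which one uses the existence of overconvergent $p$-adic families of finite-slope automorphic forms on definite unitary groups. Matching the polarization coming from the unitary group structure on $\mE_U$ with the expected polarization $\rho_\pi^\vee \cong \rho_\pi^c \otimes \epsilon^{1-m}\chi^{\mathrm{gal}}$ at classical points is standard but requires careful bookkeeping about normalizations.
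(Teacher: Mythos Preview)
The paper does not supply its own proof of Theorem~\ref{BCsign}; it is stated as a citation of \cite{BellaicheCheneviersign} and \cite{BLGGT} Theorem~2.1.1(1), and the text moves directly from the theorem statement to Remark~\ref{r64} with no proof environment in between. So there is nothing in the paper to compare your proposal against beyond the references themselves.

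That said, your outline is a reasonable sketch of the Bella{\"{\i}}che--Chenevier strategy: descend to a definite unitary group, interpret the sign as a locally constant function on the eigenvariety, and compute it at an accessible point. Two comments on accuracy. First, your reduction to a single archimedean place is valid here but you should say why: for RAECSDC $(\pi,\chi)$ the condition $\chi_v(-1)=(-1)^m$ forces $\Psi(c_v)=\epsilon^{1-m}(c_v)\chi^{\mathrm{gal}}(c_v)=-1$ for every $v\mid\infty$, so $\Psi(c_vc_{v'})=1$ and the lemma you quote gives independence of $v$. Second, your description of the ``computable'' points is not quite what \cite{BellaicheCheneviersign} does: they do not rely on endoscopic or theta-lift points, but rather show (via a $p$-adic deformation argument) that the relevant component of the eigenvariety contains classical points at which the associated pseudocharacter admits a one-dimensional subquotient, and then compute the sign from the pairing restricted to that character. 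The density and accumulation input is for regular classical points of varying weight, not for special automorphic constructions. If you intend to cite their method, you should adjust that paragraph accordingly; as written it suggests a mechanism that is plausible in spirit but not the one actually used.
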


\begin{rem} \label{r64}
If $p>2$  and $\rho_{\pi}$ is residually irreducible then the signs of $\rho_{\pi}$ and its reduction agree (see e.g. \cite{BellaicheCheneviersign} section 2.3). This implies that in such cases oddness is also known for irregular $\pi$ if there is a congruence to a RAECSDC representation.

Work of V. Lafforgue should more generally allow to relax the regularity assumption to include $\pi$ such that the descent to the corresponding unitary group has a non-degenerate limit of discrete series as archimedean component.

\end{rem}

\section{Bloch-Kato Conjecture}
Let $\pi$ be a regular $L$-algebraic cuspidal automorphic representation of $\GL_m(\bfA_K)$. As noted above, \cite{HLTT} and \cite{Scholze15} have constructed the Galois representations $\rho_{\pi} :G_K \to \GL_m(E)$ associated to $\pi$ for $E$ a (sufficiently large) finite extension of $\bfQ_p$ (we again take the arithmetic Frobenius normalisation).

In this section we state the Bloch-Kato conjecture (following the exposition in \cite{Dummigan14}) for the Asai $L$-value $L(1,\pi,r^{\pm} \otimes \Psi)$ for $\Psi:G_{K^+} \to E^*$ a continuous Galois character.

Let $F$ be a number field (including the field of definition of the Satake parameters of $\pi$), and $\fp \mid p$ a place of $F$ with $F_{\fp}\cong E$.
Let ${\rm As}^{\pm}(\rho_{\pi}):G_{K^+} \to {\rm GL}_m(E)$ be the Asai plus/minus representation defined in section \ref{Asaidefinition}.
By \cite{Clozel90}  Conjecture 4.5 (applied to the conjectural Asai transfer ${\rm As}^{\pm}(\pi)$ to ${\rm GL}_{m^2}(\bfA)$) there should exists a motive $\mM(\pi, r^{\pm}\otimes \Psi)$ of rank $m^2$ with coefficients in $F$ such that if $\rho_{\mM(\pi, r^{\pm}\otimes \Psi), \fp}:G_{K^+} \to \GL_{m^2}(E)$ is its $\fp$-adic realisation then $\rho_{\mM(\pi, r^{\pm}\otimes \Psi), \fp}(\Frob_v^{-1})$ is conjugate in $\GL_{m^2}(E)$ to $(r^{\pm} \otimes \Psi)(A(\pi_v))$, where $A(\pi_v)$ is the semisimple conjugacy class (Satake parameter) in ${}^L(R_{K/K^+} {\rm GL}_m)(\bfC)$. Note that this $\fp$-adic realisation is dual to ${\rm As}^{\pm}(\rho_{\pi}) \otimes \Psi$ due to the arithmetic Frobenius normalisation we chose for $\rho_{\pi}$.

Put $\mM:=\mM(\pi, r^{\pm}\otimes \Psi)$. 
Let $H_{\rm B}(\mM)$ and $H_{\rm dR}(\mM)$ be the Betti and de Rahm realisations, and let $H_{\rm B}(\mM)^{\pm}$ be the eigenspaces for the complex conjugation $\tilde c$. Following Deligne we call $\mM$ critical if ${\rm dim}(H_{\rm B}(\mM)^+)={\rm dim}(H_{\rm dR}(\mM)/{\rm Fil}^0)$. Assume that this is the case for $\mM$ (which requires, in particular, a choice of $r^+$ or $r^-$, see Proposition \ref{prop}).

Assume $p\gg 0$ (see section 4 of \cite{Dummigan14} for details, but in particular, so that $({\rm As}^{\pm}(\rho_{\pi}) \otimes \Psi)|_{D_v}$ can be described using Fontaine-Laffaille theory) and that $\pi$  is not ramified at any places dividing $p$. Let $\Oo_{(\fp)}$ be the localisation at $\fp$ of the ring of integers $\Oo_{F}$ of $F$. Choose an $\Oo_{(\fp)}$-lattice $T_B^{\pm}$ in $H_{\rm B}(\mM)$ in such a way that $T_{\fp}^{\pm}:=T_B^{\pm} \otimes \Oo$ is a $G_{K^+}$-invariant lattice in the $\fp$-adic realisation. Then choose an  $\Oo_{(\fp)}$-lattice $T_{\rm dR}^{\pm}$ in $H_{\rm dR}(\mM)$ in such a way that $\mathbb{V}(T_{\rm dR}^{\pm} \otimes \Oo)=T_{\fp}^{\pm}$ as $G_{K^+_p}$-representations, where $\mathbb{V}$ is the version of the Fontaine-Laffaille functor used in \cite{DiamondFlachGuo04}.


Let $\Omega$ be a Deligne period scaled according to the above choice, i.e. the determinant of the isomorphism
$$H_{\rm B}(\mM)^+ \otimes \bfC \cong (H_{\rm dR}(\mM)/{\rm Fil}^0) \otimes \bfC,$$ calculated with respect to the bases of $(T_B^{\pm})^+$ and $T_{\rm dR}^{\pm}/{\rm Fil^1}$, so well-defined up to  $\Oo_{(\fp)}^*$.

\begin{conj}[\cite{BlochKato90}, \cite{Dummigan14} Conjecture 4.1] \label{BlochKato}
If $\Sigma^+$ is a finite set of primes, containing all $v$ where $\pi_v$ or $K/K^+$ is ramified, but not containing $p$ 
then
$${\rm ord}_{\fp} \left( \frac{L^{\Sigma^+}(1,\pi,r^{\pm} \otimes \Psi)}{\Omega}\right)={\rm ord}_{\fp} \left( \frac{\# H^1_{\Sigma^+}(K^+,T_{\fp}^* \otimes (E/\Oo))}{\#H^0(K^+,T_{\fp}^*\otimes (E/\Oo))}\right),$$ where $T^*_{\fp}={\rm Hom}_{\Oo}(T_{\fp},\Oo)$, with the dual action of $G_{K^+}$, and $\#$ denotes a Fitting ideal. 
\end{conj}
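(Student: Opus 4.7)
The statement is a conjecture, so a full proof is out of reach; the rest of the paper (culminating in Theorem \ref{general}) is structured to prove the lower-bound direction, namely that ${\rm ord}_\fp$ of the Selmer quotient is at least ${\rm ord}_\fp(L^{\Sigma^+}(1,\pi,r^\pm\otimes\Psi)/\Omega)$, via the Ribet-style strategy sketched in the introduction. My plan addresses this direction.

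First I would produce an automorphic congruence: a RAECSDC cuspidal representation $\Pi$ of $\GL_{2m}(\bfA_K)$ whose associated $R:G_K\to\GL_{2m}(E)$ is polarized by $\Psi$ and satisfies $\ov R^{\rm ss}\cong \ov\rho_\pi\oplus\ov\rho_\pi^{c\vee}\otimes\Psi^{-1}|_{G_K}$, with the congruence ideal between $\Pi$ and an Eisenstein-type object divisible by ${\rm ord}_\fp(L^{\Sigma^+}(1,\pi,r^\pm\otimes\Psi)/\Omega)$. In the case $m=1$ of \cite{Berger15} this input came from Klingen-type Eisenstein series on $\U(2,2)$; for general $m$ I would use Eisenstein series on a quasi-split $\U(m,m)$ and a doubling-method calculation to read off the twisted Asai $L$-function from the constant term.

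Next I would apply a generalization of Ribet's lemma to the residually reducible but globally irreducible $R$ to extract a $G_K$-stable lattice whose mod-$\varpi$ reduction is a non-split extension of $\ov\rho_\pi^{c\vee}\otimes\Psi^{-1}|_{G_K}$ by $\ov\rho_\pi$. Using that $\pi$ is unramified at $p$ and that $p$ is large (so Fontaine-Laffaille applies, and the local-at-$p$ splitting argument from the proof of Proposition \ref{baby} carries over), this produces a class in $H^1_\Sigma(K,({\rm As}^\pm(\rho_\pi)\otimes\Psi)|_{G_K}\otimes(E/\Oo))$.

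The key step is then the descent from $K$ to $K^+$ and the identification of the correct Asai piece. Lemma \ref{Selmerres} splits the $K$-Selmer group into $\pm$-eigenspaces under complex conjugation; Lemma \ref{explicit} shows that this action agrees with the polarization-induced involution $\perp_{c_v}$; and the Bella\"{\i}che-Chenevier sign of Definition \ref{signdefn}, equal to $+1$ by Theorem \ref{BCsign} applied to $R$, picks out on which of ${\rm As}^+(\rho_\pi)\otimes\Psi$ or ${\rm As}^-(\rho_\pi)\otimes\Psi$ the class lives. Proposition \ref{prop} matches this sign to the one required for Deligne criticality, delivering the sought-for class in $H^1_{\Sigma^+}(K^+,T^*_\fp\otimes(E/\Oo))$. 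The principal obstacle is the automorphic-congruence step: aligning the exact $\fp$-valuation of the congruence ideal with that of the Asai $L$-value is a substantial problem in its own right (the Asai analogue of the Eisenstein ideal theorem). The reverse inequality in Conjecture \ref{BlochKato}, an upper bound on the Selmer group, would require an Asai Euler system and is entirely outside the paper's framework.
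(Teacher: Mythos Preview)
You correctly recognize that this is a conjecture, not a theorem, and the paper offers no proof of it. Your outline of the lower-bound strategy accurately captures the paper's architecture: the Ribet-lattice step and the descent-and-sign analysis (your steps 2 and 3) are precisely the content of Theorem~\ref{general} and its supporting Lemmas~\ref{Selmerres} and~\ref{explicit}, and the automorphic-congruence step you flag as the principal obstacle is indeed not carried out in the paper at all---it is simply packaged as the hypothesis ``there exists an absolutely irreducible representation $R$'' in Theorem~\ref{general} (the paper says explicitly at the start of Section~\ref{s7} that it only establishes ``the Galois part'').

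One correction, however: you overstate what Theorem~\ref{general} delivers. Its conclusion is only $p \mid \#H^1_{\Sigma^+}(K^+,{\rm As}^{\pm}(\rho)\otimes\Psi\otimes(E/\Oo))$, i.e.\ the existence of a single non-trivial class modulo $\varpi$, not the inequality ${\rm ord}_{\fp}(\text{Selmer}) \geq {\rm ord}_{\fp}(L^{\Sigma^+}/\Omega)$ that you attribute to it. Upgrading a mod-$\varpi$ element to a submodule of the correct length would require either iterating the lattice argument modulo higher powers of $\varpi$ (in the style of Urban or Skinner--Urban) or working with a family of congruences whose depth matches the $L$-value valuation; the paper does neither. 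So even granting your step~1 with the congruence ideal of the claimed depth, the Galois machinery as written in this paper would still fall short of the full lower bound in Conjecture~\ref{BlochKato}.
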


\begin{rem}
\begin{enumerate}
\item Note that the Langlands $L$-function $L(1,\pi, r^{\pm} \otimes \Psi)$ equals $L(1,\mM)$ 
and relates to the Galois modules on the right hand side given by $\mM^{\vee} \cong {\rm As}^{\pm}(\rho_{\pi}) \otimes \Psi$. We will write the Selmer group in the numerator of the right hand side as  $H^1_{\Sigma^+}(K^+, {\rm As}^{\pm}(\rho_{\pi}) \otimes \Psi \otimes (E/\Oo))$ in the following.

\item If $m=2$  (so that $\rho_{\pi}^{\vee} \cong \rho_{\pi} \otimes \det(\rho_{\pi})^{-1}$) and the central character of $\pi$ extends to $G_{K^+}$ (the situation in \cite{Berger15}) or if $(\pi, \chi)$ is an RAECSDC representation and $\Psi=\epsilon^{1-m}\chi^{\rm gal}$ then ${\rm As}(\rho_{\pi}) \otimes \Psi$ is self-dual (using the properties of the twisted tensor induction recalled in section \ref{Asaidefinition} and  that the transfer of $\Psi|_{G_K}$ to $G_{K^+}$ is $\Psi^2$, see Lemma 4.1 of \cite{Berger15}). 

\end{enumerate}
\end{rem}

For the case of $\Psi|_{G_K}=\epsilon^{-m}$ we analyze the criticality of the Asai motive $\mM(\pi, r^{\pm}\otimes \Psi)=\mM(\pi, r^{\pm})(m)$ (or equivalently its dual ${\rm As}^{\pm}(\rho)(-m)$):

\begin{prop} \label{prop}
Let $p>2$ and $\rho=\rho_{\pi}:G_K \to {\rm GL}_n(E)$ be a residually irreducible continuous representation arising from a regular  motive with coefficients in $F$ that is pure of some weight.  We assume that the Hodge decomposition of $H_B(\mM(\pi,r^{\pm}))$ involves $H^{a,b}$ with $0 \leq a, b \leq 2{\rm wt}(\rho)$. Then for $m \in \bfZ_{>0} \cap [1, {\rm wt}(\rho)]$ (the left-hand side of the critical strip)
the motive  ${\rm As}^{(-1)^{m+1}}(\rho)(-m)$ is critical in the sense of Deligne, whereas ${\rm As}^{(-1)^{m}}(\rho)(-m)$ is not (so exactly the odd (resp. even) integers in $[1, {\rm wt}(\rho)]$ are critical for ${\rm As}^+(\rho)$ (resp. ${\rm As}^-(\rho)$).
\end{prop}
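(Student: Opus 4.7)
The plan is to verify Deligne's numerical criticality condition $\dim_F H_B(\mM)^+ = \dim_F H_{dR}(\mM)/\Fil^0(\mM)$ for $\mM = {\rm As}^\pm(\rho)(-m)$, and to show that the two sides agree exactly when $\pm = (-1)^{m+1}$.

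The first step is to write out the Hodge decomposition of ${\rm As}^\pm(\rho)$ explicitly at each infinite place $v$ of $K^+$. Using the tensor-induction description from Section \ref{Asaidefinition}: if $\tau_1, \tau_2 = \ov{\tau_1}$ are the two embeddings of $K$ extending $v$, then $H_B({\rm As}^\pm(\rho))_v$ is $H_B(\mM(\pi))_{\tau_1} \otimes H_B(\mM(\pi))_{\tau_2}$ and its Hodge decomposition is the tensor product of the two. Writing the Hodge-Tate weights of $\rho$ at $\tau_1$ as $a_1 < \ldots < a_n$ (by regularity) and using the polarization to identify the weights at $\tau_2$ with $\{w - a_i\}$, where $w = {\rm wt}(\rho)$, the Hodge pieces of ${\rm As}^\pm(\rho)$ at $v$ take the form $H^{w + d_{ij}, w - d_{ij}}$ indexed by $(i,j) \in \{1,\ldots,n\}^2$ with $d_{ij} = a_i - a_j$; after the Tate twist they become $H^{w - m + d_{ij}, w - m - d_{ij}}$, and all coordinates remain in range thanks to the assumption $0 \leq a, b \leq 2w$.

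I would then compute both sides. On the Hodge side, $\dim H_{dR}/\Fil^0 = \#\{(i,j) : w - m + d_{ij} < 0\}$, a count which is swap-symmetric because $d_{ij} = -d_{ji}$. On the Betti side, the formula ${\rm As}^\pm(\rho)(\tilde c)(x \otimes y) = \pm y \otimes x$ shows that the swap on $V \otimes V^c$ has trace $\pm n$, and the Tate twist contributes $\epsilon^{-m}(c_v) = (-1)^m$, giving ${\rm tr}(c_v | {\rm As}^\pm(\rho)(-m)) = \pm(-1)^m n$ and hence $\dim H_B^+ = (n^2 \pm (-1)^m n)/2$. This equals $\binom{n}{2}$ precisely when $\pm = (-1)^{m+1}$, and $\binom{n+1}{2}$ otherwise.

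The parity claim then follows by comparison: only the sign $(-1)^{m+1}$ is compatible with criticality, since the swap-antisymmetric dimension $\binom{n}{2}$ on the Betti side matches the Hodge count of pieces with negative first coordinate, while the opposite sign forces the symmetric dimension $\binom{n+1}{2}$ which cannot match. The main obstacle is treating the Hodge pieces lying on the boundary after the twist---those with first coordinate $0$, corresponding to $d_{ij} = m - w$---whose $c_v$-action must split compatibly between $H_B^+$ and $H_B^-$ so that Deligne's actual criticality (not merely numerical equality) holds; this is where the swap-symmetry of the Hodge decomposition and the polarization of $\pi$ play a decisive role, ensuring that the comparison isomorphism $H_B^+ \otimes \mathbf{C} \oplus \Fil^0 H_{dR} \otimes \mathbf{C} \cong H_{dR} \otimes \mathbf{C}$ can be verified piece by piece.
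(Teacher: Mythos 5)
Your proposal takes essentially the same approach as the paper: it reduces Deligne's criticality to matching $\dim H_B(\mM)^+$ against $\dim H_{\rm dR}(\mM)/\Fil^0 = \binom{n}{2}$, and your shortcut via $\mathrm{tr}(c_v \mid {\rm As}^{\pm}(\rho)(-m)) = \pm(-1)^m n$ replaces the paper's explicit eigenbasis of the model $M_B \otimes M_B^c(m) \otimes K^+(\chi_{K/K^+})_B^{\otimes m+1}$, arriving at the same dimension count. One caveat: your final worry about boundary Hodge pieces with first coordinate $0$ is not quite the right obstruction — for $m < w$ any \emph{off-diagonal} Hodge type falling in the closed band $[0,\,2(w-m)]$ already violates Deligne's ``$p<0\le q$ or $q<0\le p$'' criterion, so the real content of criticality there is that the band contains only the diagonal pieces (a condition the paper, like your ``swap-symmetric'' remark, also leaves implicit under the phrase ``left-hand side of the critical strip''), rather than any compatible splitting of the $c_v$-action on a degenerate piece.
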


\begin{proof}
We need to show $${\rm dim} H_B(\mathcal{M}(\pi, r^{(-1)^{m+1}})(m))^+={\rm dim} H_{\rm dR}(\mathcal{M}(\pi,r^{(-1)^{m+1}})(m))/{\rm Fil}^0.$$ (In the following we will write $r$ for $r^{(-1)^{m+1}}$.) Since we assume that $m$ lies in the left-hand side of the critical strip and that $\rho$ arises from a regular pure motive we know that  \begin{eqnarray*} {\rm dim} H_{\rm dR}(\mathcal{M}(\pi,r)(m))/{\rm Fil}^0&=&\frac{1}{2}({\rm dim} H_{\rm dR}(\mathcal{M}(\pi,r)(m))-h^{{\rm wt}(\rho),{\rm wt}(\rho)})\\ &=& \frac{n(n-1)}{2}.\end{eqnarray*} 

To calculate ${\rm dim} H_B(\mathcal{M}(\pi, r)(m))^+$ we follow the argument in \cite{Harris13} Section 1.3: Write $M_B:=H_B(\mathcal{M}(\rho))$ for the Betti realisation of the motive corresponding to $\rho$, which is a module over the number field $F$. Choose an $F$-basis $e_1, \ldots e_n$ of $M_B$. Let $t_B=2 \pi it$ be a rational basis for $K^+(1)_B=(2 \pi i) K^+$ and $e_{\chi}$ a basis vector for the Betti realisation of the Artin motive $K^+(\chi_{K/K^+})$ of rank 1 over $K^+$ attached to the character $\chi_{K/K^+}$. As a model of $H_B(\mathcal{M}(\pi, r)(m))$ we take $$H_B(\mathcal{M}(\pi, r)(m))=M_B \otimes M_B^c(m) \otimes K^+(\chi_{K/K^+})_B^{\otimes m+1}$$ with the action \begin{eqnarray*}\tilde c(e_a \otimes e_b^c \otimes t_B^{m}\otimes e_{\chi}^{m+1})&=&e_b \otimes e_a^c \otimes (-1)^m t_B^{m}\otimes (-1)^{m+1} e_{\chi}^{m+1}\\&=&-e_b \otimes e_a^c \otimes t_B^{m}\otimes e_{\chi}^{m+1}.\end{eqnarray*} This implies that the vectors $$\{e_{ab}^+=[e_a \otimes e_b^c -e_b \otimes e_a^c] \otimes t_B^{m}\otimes e_{\chi}^{m+1}, a<b\}$$ form a basis for $H_B(\mathcal{M}(\pi, r^{(-1)^{m+1}})(m))^+$, so that its dimension is therefore also $\frac{n(n-1)}{2}$.
\end{proof}

\section{Construction of elements in critical Selmer groups for general Asai representations} \label{s7}

This section proves the main result Theorem \ref{general} which establishes the Galois part of the proof of one direction of the Bloch-Kato conjecture for general Asai representations (i.e. we assume the existence of a residually reducible representation $R$ that could be constructed by establishing suitable congruences of polarized automorphic forms). Note that the assumption in the theorem on the reduction of $R$ is that $\ov{R}^{\rm ss}$ has two summands that get swapped under the involution $\sigma \mapsto \sigma^{c \vee} \otimes \Psi^{-1}$ corresponding to the polarization of $(R,\Psi)$.

\begin{thm} \label{general}
Let $p>2$, $\Psi:G_{K^+} \to E^*$ a character, and $\rho:G_K \to {\rm GL}_n(E)$ a residually irreducible continuous representation with $$\ov{\rho} \not \equiv \ov{\rho}^{c \vee} \Psi^{-1} \mod{\varpi}.$$ Assume there exists an absolutely irreducible representation $$R: G_K \to {\rm GL}_{2n}(E)$$ that is unramified away from a finite set of places $\Sigma$ and short crystalline at $v \mid p$ such that $$R^{\vee} \cong R^c \otimes {\Psi}|_{G_K}$$ 
and $$\ov{R}^{\rm ss}\cong \ov{\rho} \oplus \ov{\rho}^{c \vee} \otimes \ov{\Psi}^{-1}|_{G_K}.$$ If $\Sigma^+$ is the set of places of $K^+$ lying below $\Sigma$  then $$p \mid \#H^1_{\Sigma^+}(K^+, {\rm As}^{-\Psi(\tilde c){\rm sign}(R, \Psi, \tilde c)}(\rho)\otimes \Psi \otimes (E/\Oo)).$$ 

\end{thm}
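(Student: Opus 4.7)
The plan is to adapt Ribet's classical strategy to the polarized CM-field setting: first construct a non-split extension from $R$ by a suitable lattice choice, then descend the resulting $G_K$-cohomology class to $K^+$ using the sign calculation of Bella\"iche--Chenevier to identify the correct $\pm$-Asai coefficient module. First, since $\ov{\rho}\not\equiv \ov{\rho}^{c\vee}\otimes\ov{\Psi}^{-1}|_{G_K}\pmod{\varpi}$ and $R$ is absolutely irreducible, a standard application of Ribet's lemma (cf.\ the proof of Proposition \ref{baby}) produces a $G_K$-stable $\Oo$-lattice $T\subset R$ whose reduction sits in a non-split short exact sequence
\begin{equation*}
0 \longrightarrow \ov{\rho} \longrightarrow \ov{T} \longrightarrow \ov{\rho}^{c\vee}\otimes \ov{\Psi}^{-1}|_{G_K} \longrightarrow 0
\end{equation*}
of $\bfF[G_K]$-modules, yielding a non-trivial extension class
\begin{equation*}
\xi \in H^1(G_K,\, \ov{\rho}\otimes\ov{\rho}^c\otimes\ov{\Psi}|_{G_K}).
\end{equation*}
The coefficient module is the restriction to $G_K$ of both ${\rm As}^+(\ov\rho)\otimes\ov\Psi$ and ${\rm As}^-(\ov\rho)\otimes\ov\Psi$ as $G_{K^+}$-modules, the two $G_{K^+}$-structures differing by the twist $\chi_{K/K^+}$.

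Next I determine which $\pm$-Asai coefficient system contains $\xi$ after descent. By Lemma \ref{Selmerres}, restriction identifies $H^1_{\Sigma^+}(K^+,{\rm As}^{\epsilon}(\ov\rho)\otimes\ov\Psi)$ with the $+$-eigenspace for the $\tilde c$-action on $H^1_\Sigma(K,\ov\rho\otimes\ov\rho^c\otimes\ov\Psi|_{G_K})$ coming from the ${\rm As}^{\epsilon}$-extension, while twisting by $\chi_{K/K^+}$ swaps ${\rm As}^+$ with ${\rm As}^-$. Choosing $\epsilon=-\Psi(\tilde c)$, Lemma \ref{explicit} identifies the resulting $\tilde c$-action on cohomology with the polarization involution $\perp_{\tilde c}$ arising from $R^\vee\cong R^c\otimes\Psi|_{G_K}$; and the analysis of \cite{BellaicheChenevierbook} shows that $\perp_{\tilde c}$ acts on the extension class $\xi$ attached to the polarized representation $(R,\Psi)$ by the scalar ${\rm sign}(R,\Psi,\tilde c)$. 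Combining these, $\xi$ lies in the ${\rm sign}(R,\Psi,\tilde c)$-eigenspace for the ${\rm As}^{-\Psi(\tilde c)}$-$G_{K^+}$-structure, which by Lemma \ref{Selmerres} corresponds to a non-zero class in $H^1\bigl(K^+,\,{\rm As}^{-\Psi(\tilde c)\cdot{\rm sign}(R,\Psi,\tilde c)}(\ov\rho)\otimes\ov\Psi\otimes(E/\Oo)\bigr)$; the case analysis ${\rm sign}=\pm 1$ is easily checked against the assignment $\epsilon=-\Psi(\tilde c)\cdot{\rm sign}(R,\Psi,\tilde c)$.

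Finally, I verify the Bloch--Kato Selmer conditions. At finite places $v\notin\Sigma^+$, the representation $R$ is unramified, so $\xi$ is unramified and hence in $H^1_f$; at places dividing $p$, short crystallinity of $R$ combined with Fontaine--Laffaille theory (as in \cite{DiamondFlachGuo04,BergerKlosin13}) places the class in the local crystalline subspace, and the compatibility of local Selmer conditions under the Shapiro-type restriction isomorphism of Lemma \ref{Selmerres} (via the projection formula for $B_{\rm cris}$) transports the condition to $K^+$. Archimedean conditions are automatic since $p>2$. Non-splitness of $\ov T$ ensures the class is non-zero modulo $\varpi$, yielding the claimed $p$-divisibility. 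The main obstacle is Lemma \ref{explicit}: precisely identifying the $\tilde c$-action on the cohomology of ${\rm As}^{-\Psi(\tilde c)}(\ov\rho)\otimes\ov\Psi$ with the conjugate-duality involution $\perp_{\tilde c}$. This requires a careful cocycle-level computation tracking the intertwining matrix $A_{\tilde c}$, its (anti)symmetry, and the sign from the swap in the definition of ${\rm As}^\pm$; it is exactly this identification that couples the polarization sign ${\rm sign}(R,\Psi,\tilde c)$ with the parity $\Psi(\tilde c)$ to produce the correct $\pm$-Asai coefficient system in the final formula.
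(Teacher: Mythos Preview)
Your proposal follows essentially the same strategy as the paper's proof: Ribet/Urban lattice construction, identification of the coefficient module with ${\rm As}(\ov\rho)\otimes\ov\Psi|_{G_K}$, descent via Lemma~\ref{Selmerres}, and the sign computation via Lemma~\ref{explicit} combined with the Bella\"iche--Chenevier result. Your rephrasing of Lemma~\ref{explicit} in terms of ${\rm As}^{-\Psi(\tilde c)}$ rather than ${\rm As}^+$ is equivalent and perfectly fine.

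There is, however, one genuine subtlety you gloss over. The Bella\"iche--Chenevier result (\cite{BellaicheChenevierbook} Proposition~1.8.10(i)) does \emph{not} assert that $\perp_{\tilde c}$ acts by the scalar ${\rm sign}(R,\Psi,\tilde c)$ on all of $\Ext^1_{\bfF[G_K]}(\ov\rho^{\tilde c\vee}\ov\Psi^{-1},\ov\rho)$; it only asserts this on the subspace $\Ext^1_{\bfF[G_K]/\ker T}(\ov\rho^{\tilde c\vee}\ov\Psi^{-1},\ov\rho)$, where $T=\tr(\widehat R)$ is the trace of the $\Oo$-linear extension of $R$ to $\Oo[G_K]$. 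You therefore need to check that the class $\xi$ produced by Ribet's lemma actually lies in this smaller Ext group. The paper does this by noting that, since $R$ is absolutely irreducible, \cite{BellaicheChenevierbook} Proposition~1.6.4 gives $\ker\widehat R=\ker T$, so $\widehat R$ factors through $\Oo[G_K]/\ker T$, and applying Urban's Theorem~1.1 again over this quotient shows $\xi\in\Ext^1_{\bfF[G_K]/\ker T}$. Without this step the sign identification is unjustified.

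A smaller omission: to pass from a non-zero class in $H^1_\Sigma(K,{\rm As}(\ov\rho)\otimes\ov\Psi)$ to one in $H^1_\Sigma(K,{\rm As}(\rho)\otimes\Psi\otimes(E/\Oo))$ you need the injection coming from $H^0(G_{K^+},{\rm As}(\ov\rho)\otimes\ov\Psi)=0$, which the paper deduces from the hypothesis $\ov\rho\not\equiv\ov\rho^{c\vee}\ov\Psi^{-1}$.
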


\begin{proof}
We generalize the proof of Theorem 6.1 in \cite{Berger15}. 

By assumption $\ov{R}^{\rm ss}\equiv  \ov{\rho} \oplus \ov{\rho}^{\tilde c \vee} \otimes \ov{\Psi}^{-1}|_{G_K} \mod{\varpi}$. By Ribet's lemma (see e.g. Theorem 1.1 of \cite{Urban01}) we know there exists a lattice for $R$ such that \begin{equation} \label{lattice} R\equiv \begin{pmatrix} \ov{\rho}&*\\0&\ov{\rho}^{\tilde c \vee} \ov{\Psi}^{-1}|_{G_K}\end{pmatrix} \mod{\varpi}\end{equation} and this extension is not split.

We claim now that this gives rise to an element in $$H^1_{\Sigma}(K,{\rm As}(\rho)\otimes \Psi \otimes (E/\Oo))^{-\Psi(\tilde c){\rm sign}(R, \Psi, \tilde c)},$$ which is isomorphic to $H^1_{\Sigma^+}(K^+, {\rm As}^{-\Psi(\tilde c){\rm sign}(R, \Psi, \tilde c)}(\rho)\otimes \Psi \otimes (E/\Oo))$ by Lemma \ref{Selmerres}.

First note that
\eqref{lattice} gives a non-trivial class in $H^1(G_K, \Hom_{\bfF}(\ov{\rho}^{\tilde c \vee}\ov{\Psi}^{-1}|_{G_K}, \ov{\rho}))$.  By the assumptions on the ramification and crystallinity of $\tilde R$ we have, in fact, a class in $H^1_{\Sigma}(G_K, \Hom_{\bfF}(\ov{\rho}^{\tilde c \vee}\ov{\Psi}^{-1}, \ov{\rho}))$. Since $$\Hom_{\bfF}(\ov{\rho}^{\tilde c \vee} \ov{\Psi}^{-1}, \ov{\rho})\cong \ov{\rho} \otimes \ov{\rho}^{\tilde c} \otimes\ov{\Psi}\cong ({\rm As}(\ov{\rho})\otimes \ov{\Psi})|_{G_K}$$
 we obtain an element in $$H^1_{\Sigma}(G_K, {\rm As}(\ov{\rho})\otimes \ov{\Psi}) \cong H^1_{\Sigma}(K,{\rm As}(\rho) \otimes \Psi )\otimes (E/\Oo)[\varpi]),$$ which injects into $H^1_{\Sigma}(K,{\rm As}({\rho})\otimes \Psi \otimes (E/\Oo))$ since $H^0(G_{K^+},{\rm As}(\ov{\rho})\otimes \ov{\Psi})$=0. To see the latter, assume that $\Hom_{G_{K^+}}(\mathbf{1}, {\rm As}(\ov{\rho})\otimes \ov{\Psi}) \neq 0$. This implies  $\Hom_{G_{K}}(\mathbf{1}, \ov{\rho} \otimes  \ov{\rho}^c \otimes \ov{\Psi}) \neq 0$, contradicting our assumption that $\ov{\rho}$ and $\ov{\rho}^{c \vee} \ov{\Psi}^{-1}$ are irreducible and non-isomorphic.


\begin{lemma} \label{explicit}
On $H^1(G_K, {\rm As}^+(\ov{\rho})\otimes \ov{\Psi})$ the $\tilde c$-action coincides with $-\Psi(\tilde c)$ times the ``polarization involution" $\perp_{\tilde c}$ arising from the involution on $\bfF[G_K]$ given by $g \mapsto \tau(g):=\tilde cg^{-1}\tilde c^{-1} \ov{\Psi}^{-1}(g)$ for $g \in G_K$.
\end{lemma}

\begin{proof}
We recall from \cite{BellaicheChenevierbook} Section 1.8 for how $\tau$ induces an involution on $H^1(G_K, {\rm As}(\ov{\rho})\otimes \ov{\Psi})$. (For this action we only require the $G_K$-module, so we will write the coefficients as $\Hom_{\bfF}(\ov{\rho}^{\tilde c \vee} \ov{\Psi}^{-1}, \ov{\rho})$.) In our case, $\tau$ is an anti-involution of algebras, and the corresponding involution on representations $\sigma: G_K \to {\rm GL}_m(\bfF)$ 
is given by $\sigma \mapsto \sigma^{\perp}:= (\sigma \circ \tau)^T$. 

\cite{BellaicheChenevierbook} (26) on p.51 explains that the induced involution on $H^1(K,\Hom_{\bfF}(\ov{\rho}^{\tilde c \vee}\ov{\Psi}^{-1}, \ov{\rho})))$ can be described as follows: 
Associate to a cocycle $\phi \in Z^1(G_K, \Hom_{\bfF}(\ov{\rho}^{\tilde c \vee}\ov{\Psi}^{-1}, \ov{\rho})))$ the  representation $$\rho_{\phi}: G_K \to \GL_{2n}(\bfF): g \mapsto \begin{pmatrix}\ov{\rho}(g)&b(g)\\0&\ov{\rho}^{\tilde c \vee}\ov{\Psi}^{-1}(g) \end{pmatrix}$$ with $b(g)=\phi(g) \ov{\rho}^{\tilde c \vee} \ov{\Psi}^{-1}(g)$. Then $\phi^{\perp}:=\phi^{\perp_{\tilde c}}$ given by $$\phi^{\perp}(g):= b^T(\tilde cg^{-1}\tilde c^{-1}) \ov{\rho}^{\tilde c \vee}(g^{-1})$$ defines an involution $\perp$ on  $H^1_{\Sigma}(K, \Hom_{\bfF}(\ov{\rho}^{\tilde c \vee} \ov{\Psi}^{-1}, \ov{\rho})))$.

As in the proof of Lemma 5.2 in \cite{Berger15} we calculate that $$\phi^{\perp}(g)=\ov{\rho}(g) \phi^T(\tilde cg^{-1}\tilde c^{-1}) \rho^{\tilde c \vee}(g^{-1})\ov{\Psi}(g)=-\phi^T(\tilde cg\tilde c^{-1}),$$ where we used the cocycle relation for the last equality.

We compare this to the action of $\tilde c \in G_{K^+}$ on $[\phi] \in H^1_{\Sigma}(K,{\rm As}(\ov{\rho})\otimes \ov{\Psi})$ given by $$(\tilde c.\phi)(g)={\rm As}(\ov{\rho})\otimes \Psi(\tilde c) \phi(\tilde cg \tilde c^{-1}).$$ Since ${\rm As}(\rho)(\tilde c)$ acts as transpose on the upper right shoulder, $$\phi^T \in Z^1(G_K, \Hom_{\bfF}( \ov{\rho}^{\vee}, (\ov{\rho}^{\tilde c \vee} \ov{\Psi}^{-1})^{\vee})=Z^1(G_K,  {\rm As}(\ov{\rho})\otimes \ov{\Psi}|_{G_K}),$$ complex conjugation acts by $\phi \mapsto (g \mapsto \Psi(\tilde c) \phi^T (cgc^{-1}))$ on  a cocyle representing a class $[\phi] \in  H^1_{\Sigma}(K, {\rm As}(\ov{\rho})\otimes \ov{\Psi})$.

\end{proof}

By using a result of Bella{\"{\i}}che and Chenevier we can now show that the extension \eqref{lattice} constructed using  Theorem 1.1 of \cite{Urban01} lies in $H^1(K,{\rm As}(\rho)\otimes \Psi \otimes (E/\Oo))^{-\Psi(\tilde c){\rm sign}(R, \Psi, \tilde c)}$. In fact, Urban' s Theorem 1.1  constructs an element $c_G$ of ${\rm Ext}^1_{\bfF[G_K]}(\ov{\rho}^{\tilde c \vee} \ov{\Psi}^{-1}, \ov{\rho})$ by considering the linear extension $\widehat{{R}}$ of $R$ to $\Oo[G_K] \to M_{2n}(\Oo)$.  Put $T:= \tr(\widehat{{R}})$.  Since $\widehat{{R}}$ is absolutely irreducible \cite{BellaicheChenevierbook} Proposition 1.6.4 tells us that $\ker \widehat{{R}}=\ker T$. The morphism $\widehat{{R}}$ induces a morphism from $\Oo[G_K]/\ker \widehat{{R}}$ and so, applying \cite{Urban01} Theorem 1.1 again, we see that $c_G$ lies in the subspace  $${\rm Ext}^1_{\bfF[G_K]/\ker T}(\ov{\rho}^{\tilde c \vee} \ov{\Psi}^{-1}, \ov{\rho})\subset \Ext^1_{\bfF[G_K]}(\ov{\rho}^{\tilde c \vee} \ov{\Psi}^{-1}, \ov{\rho}).$$
We conclude the proof using \cite{BellaicheChenevierbook} Proposition 1.8.10(i) which tells us that $\perp_{\tilde c}$ acts by multiplication by ${\rm sign}(\tilde R, \Psi, \tilde c)$ 
on $\Ext^1_{\bfF[G_K]/\ker T}(\ov{\rho}^{\tilde c \vee} \ov{\Psi}^{-1}, \ov{\rho})$.

\end{proof}

Under some mild additional assumptions one can check that the Theorem implies that this construction always produces elements in the Selmer group for the Asai motive that is critical in the sense of Deligne. 

For $\Psi|_{G_K}=\epsilon^{-w}$ (e.g. the polarization character for the Galois representations associated to RACSDC representations of ${\rm GL}_{w+1}(\bfA_K)$) and ${\rm sign}(R)=1$ we have ${\rm As}^{-\Psi(\tilde c){\rm sign}(R)}(\rho(i))\otimes \Psi={\rm As}^{(-1)^{w+1}}(\rho)(-w+2i)$. We can therefore deduce the following corollary to Theorem \ref{general} and Proposition \ref{prop}:

\begin{cor} \label{cor}
Let $p>2$, $K/K^+$ a CM field and $\rho:G_K \to {\rm GL}_n(E)$ be an residually irreducible continuous representation arising from a regular motive that is pure of some weight.  Consider an absolutely irreducible representation $$R: G_K \to {\rm GL}_{2n}(E)$$ with ${\rm sign}(R)=1$ that is pure of weight $w$, unramified away from a finite set of places $\Sigma$ and short crystalline at $v \mid p$ and satisfies $$R^{\vee} \cong R^c \otimes \Psi|_{G_K}$$ with $\Psi:G_K \to E^*$ such that $\Psi|_{G_K}=\epsilon^{-w}$. Assume that for some $i \in \bfZ$  such that $w-2i$ lies in the left-hand side of the critical strip of ${\rm As}(\rho)$ we have
$$\ov{R}^{\rm ss}=\ov{\rho}(i) \oplus \ov{\rho}^c(i)^{\vee} \otimes \ov{\Psi}^{-1}|_{G_K}$$ and  $$\ov{\rho}(i) \not \equiv \ov{\rho}^c(i)^{\vee} \ov{\Psi}^{-1} \mod{\varpi}.$$ 
If $\Sigma^+$ is the set of places of $K^+$ lying below $\Sigma$  then $$p \mid \#H^1_{\Sigma^+}(K^+, {\rm As}^{(-1)^{w+1}}(\rho)(-w+2i) \otimes (E/\Oo))$$  and ${\rm As}^{(-1)^{w+1}}(\rho)(-w+2i)$ is critical.
\end{cor}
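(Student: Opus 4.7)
The proof is essentially an application of Theorem~\ref{general} to the twist $\rho(i)$ in place of $\rho$, combined with Proposition~\ref{prop} to verify criticality, plus some care in identifying the resulting Asai representation. First I would check that the hypotheses of Theorem~\ref{general} hold with $\rho$ replaced by $\rho(i)$: residual irreducibility is preserved by character twist, the non-isomorphism $\ov{\rho(i)} \not\equiv \ov{\rho(i)}^{c\vee}\ov{\Psi}^{-1} \pmod{\varpi}$ is assumed, and $R$ (together with $\Psi$) supplies the required polarized residually reducible representation whose two Jordan--H\"older constituents are $\ov{\rho(i)}$ and $\ov{\rho(i)}^{c\vee}\ov{\Psi}^{-1}|_{G_K}$. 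Theorem~\ref{general} then produces
$$p \mid \#H^1_{\Sigma^+}\bigl(K^+,\,{\rm As}^{-\Psi(\tilde c)\,{\rm sign}(R,\Psi,\tilde c)}(\rho(i))\otimes \Psi\otimes (E/\Oo)\bigr),$$
and the hypothesis ${\rm sign}(R)=1$ collapses the Asai superscript to $-\Psi(\tilde c)$.

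Next I would rewrite ${\rm As}^{-\Psi(\tilde c)}(\rho(i))\otimes \Psi$ in the form stated in the corollary. By Lemma~\ref{lemPras}(1), ${\rm As}^+(\rho(i)) \cong {\rm As}^+(\rho)\otimes {\rm As}^+(\epsilon^i)$, and since $\epsilon^i$ extends from $G_K$ to $G_{\bfQ}$, Lemma~\ref{lemPras}(3) (together with an idelic computation of the transfer, which sends a Hecke character of $K$ to its norm on $K^+$) gives ${\rm As}^+(\epsilon^i) = \epsilon^{2i}$ as a character of $G_{K^+}$. Combined with ${\rm As}^-(\rho) \cong {\rm As}^+(\rho)\otimes \chi_{K/K^+}$, this yields
$${\rm As}^{\pm}(\rho(i))\otimes \Psi \cong {\rm As}^{\pm}(\rho)\otimes \epsilon^{2i}\otimes \Psi.$$
Since $\Psi|_{G_K} = \epsilon^{-w}$, the character $\Psi$ equals either $\epsilon^{-w}$ or $\epsilon^{-w}\chi_{K/K^+}$; in both cases a direct computation of $\Psi(\tilde c)$ shows
$${\rm As}^{-\Psi(\tilde c)}(\rho(i))\otimes \Psi \cong {\rm As}^{(-1)^{w+1}}(\rho)(-w+2i),$$
which matches the Selmer group in the claim.

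Finally, setting $m := w - 2i$, the hypothesis that $w-2i$ lies in the left-hand side of the critical strip of ${\rm As}(\rho)$ gives $m\in \bfZ_{>0}\cap [1,{\rm wt}(\rho)]$, so Proposition~\ref{prop} implies that ${\rm As}^{(-1)^{m+1}}(\rho)(-m) = {\rm As}^{(-1)^{w+1}}(\rho)(-w+2i)$ is critical in the sense of Deligne. I expect the only real obstacle to be the sign bookkeeping in the second paragraph: one must track how the sign $-\Psi(\tilde c)$ absorbs the possible $\chi_{K/K^+}$-twist in $\Psi$, so that the two possibilities for $\Psi$ and the two parities of $w$ all yield the same Asai superscript $(-1)^{w+1}$.
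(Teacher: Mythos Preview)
Your proposal is correct and follows essentially the same approach as the paper: apply Theorem~\ref{general} to $\rho(i)$, identify ${\rm As}^{-\Psi(\tilde c)}(\rho(i))\otimes\Psi$ with ${\rm As}^{(-1)^{w+1}}(\rho)(-w+2i)$, and invoke Proposition~\ref{prop} for criticality. In fact you supply more detail than the paper does, since the paper simply asserts the identity ${\rm As}^{-\Psi(\tilde c)\,{\rm sign}(R)}(\rho(i))\otimes\Psi = {\rm As}^{(-1)^{w+1}}(\rho)(-w+2i)$ in the sentence preceding the corollary, whereas your case analysis (whether $\Psi=\epsilon^{-w}$ or $\Psi=\epsilon^{-w}\chi_{K/K^+}$) makes explicit the sign bookkeeping you flagged as the main obstacle.
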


\begin{rem}
By the Bloch-Kato conjecture \ref{BlochKato} the Selmer group in the corollary should be governed by the $L$-value (on the right hand side of the critical strip) $L(1+w-2i, \pi,r^{(-1)^{w+1}}).$  For polarized representations $\rho$ (i.e. when $\rho^c=\rho^{\vee}({\rm wt}(\rho))$) the Deligne conjecture for the corresponding $L$-values is discussed in \cite{Harris13} Section 1.3 (where the Asai representation is viewed as an extension of the adjoint representation $\rho \otimes \rho^{\vee}$). (Note that because of our assumption that $\ov{\rho} \not \equiv \ov{\rho}^{c \vee} \otimes \ov{\Psi}^{-1} \mod{\varpi}$ these could be studied in Corollary \ref{cor} only for $i \neq 0$.)
\end{rem}

\section{Fontaine-Mazur conjecture}

By combining Fontaine-Mazur's conjecture (\cite{FontaineMazur} Conjecture  1) with Langlands automorphy conjectures one arrives at the following conjecture:
\begin{conj} \label{conj}
Let $R:G_K \to \GL_m(\ov{\bfQ_p})$ be a continuous irreducible potentially semi-stable representation. Let $\Psi:G_{K^+} \to \ov{\bfQ}_p^*$ be a continuous character such that 
$$R^{\vee} \cong R^c \otimes \Psi|_{G_K}.$$
Then there exists an algebraic essentially conjugate self-dual cuspidal automorphic representation $\pi$ of $\GL_m/K$ such that $\rho_{\pi} \cong R$.
\end{conj}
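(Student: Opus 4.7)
The plan is to follow the standard Taylor--Wiles--Kisin strategy for the Fontaine--Mazur conjecture, adapted to the polarized setting over CM fields developed by Clozel--Harris--Taylor, BLGGT, and Thorne. First I would fix a $G_{K^+}$-stable $\Oo$-lattice in $R$ (using $\Psi$ to keep track of the polarization under reduction), reduce modulo $\varpi$, and semisimplify to obtain a polarized residual representation $(\ov R,\ov \Psi)$ in the sense of Section 5 of this paper. The contrapositive of Theorem \ref{BCsign} provides a necessary oddness condition on $R$ that any proof must recover along the way. The argument then bifurcates according to whether $\ov R$ is absolutely irreducible or reducible.

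In the residually irreducible case with sufficiently large (``adequate'') image, the modularity lifting theorems of Clozel--Harris--Taylor, BLGGT, and Thorne would reduce the automorphy of $R$ to the automorphy of $\ov R$. Establishing residual modularity of $\ov R$ - producing an RAECSDC representation whose Galois representation is congruent to $R$ modulo $\varpi$ - is therefore the first major hurdle, since no analog of the Khare--Wintenberger theorem is known over CM fields. The realistic path is via potential automorphy, using the Moret-Bailly--style arguments of Harris--Shepherd-Barron--Taylor and Barnet-Lamb--Gee--Geraghty--Taylor to find a solvable CM extension $L/K$ over which $\ov R|_{G_L}$ becomes modular, followed by solvable base change for unitary groups and the Clozel--Harris--Labesse descent to come back down to $K$. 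One then matches local conditions at $p$ using Kisin's local deformation rings and the Breuil--M\'ezard philosophy to identify the component of the deformation ring cut out by $R$ with an automorphic one.

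The residually reducible case is precisely the setting that Theorem \ref{general} addresses from the Selmer-group side. The Taylor--Wiles patching method breaks down here, and one must invoke the Skinner--Wiles style arguments that interpolate along pseudorepresentations, together with the recent $R=\mathbb{T}$ theorems for residually reducible polarized representations (Skinner--Wiles, Pan, Thorne, Berger--Klosin). The natural dovetail with this paper is to run the construction of Theorem \ref{general} in reverse: triviality of the critical Selmer group $H^1_{\Sigma^+}(K^+,\mathrm{As}^{\pm}(\rho)\otimes\Psi\otimes(E/\Oo))$ (a Vandiver-type hypothesis, as discussed in Corollary \ref{cor8.3}) would exclude the existence of the ``boundary'' representations $R$ with $\ov R^{\mathrm{ss}}$ swapped by the polarization, reducing Conjecture \ref{conj} to the residually irreducible case above.

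The principal obstacle is residual modularity over CM fields: without either a Serre-type theorem or a potential-automorphy input tailored to the polarization and the Hodge--Tate weights of $R$, the modularity lifting machinery has nothing to lift. A secondary but substantial obstacle is the small-image case - $\ov R$ dihedral, induced from a subfield, or having image intersecting the centre non-trivially - where Taylor--Wiles primes fail to exist and one must fall back on the CM-theoretic constructions of Hecke--Shimura type, handled directly. For that reason I would not expect a clean unconditional proof; a realistic theorem would carry hypotheses of the form ``$\ov R$ adequate, not induced from a proper CM subfield, and potentially automorphic over some solvable CM extension,'' which is the state of the art for the analogous essentially self-dual case over totally real fields.
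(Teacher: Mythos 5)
The statement you are asked about is Conjecture \ref{conj}: the paper does not prove it, and does not claim to. It is stated as the combination of the Fontaine--Mazur conjecture with Langlands' automorphy conjectures; the paper only records the known partial results (potential automorphy for totally odd, residually irreducible $R$ by BLGGT, and Thorne's theorem for certain residually reducible $R$ of Schur type) and then proves \emph{evidence} for it, namely Corollary \ref{cor8.3}: under a Vandiver-type non-divisibility hypothesis on the non-critical Asai Selmer group, a residually reducible polarized $R$ whose residual constituents are swapped by the polarization must be totally odd. So there is no ``paper's own proof'' for your proposal to be measured against, and your own closing paragraph correctly concedes that no unconditional argument is available; what you have written is a survey of the standard modularity-lifting programme, not a proof.

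Within that survey there are two concrete misreadings of how this paper interacts with the conjecture. First, you propose to ``run Theorem \ref{general} in reverse'' so that triviality of the critical Selmer group would \emph{exclude} the boundary representations $R$ with $\ov R^{\mathrm{ss}}$ swapped by the involution, thereby reducing to the residually irreducible case. That is not what the paper's results give: Theorem \ref{general} produces a nonzero class in the Selmer group of sign $-\Psi(\tilde c)\,{\rm sign}(R,\Psi,\tilde c)$, and Corollary \ref{cor8.3} assumes $p$-indivisibility of the \emph{non-critical} ($+\Psi(\tilde c)$) Selmer group to conclude only that ${\rm sign}(R,\Psi,\tilde c)=1$, i.e.\ that $R$ is totally odd. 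Such $R$ are not excluded, and their automorphy is not established; oddness is merely a necessary condition predicted by Conjecture \ref{conj} via Theorem \ref{BCsign}. Second, the residually reducible $R=\mathbb{T}$ results you cite (Thorne's Theorem 7.1 in particular) require $\ov R$ of Schur type, i.e.\ residual constituents \emph{not} swapped by the involution --- exactly the opposite of the configuration this paper studies --- and the paper explicitly notes that those deformation-theoretic techniques do not apply in its setting. Together with the open problem of residual modularity (Serre-type results or unconditional potential automorphy inputs over CM fields), these gaps mean your outline cannot be upgraded to a proof of Conjecture \ref{conj}, conditional or otherwise, by the routes you describe.
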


\begin{rem}
Results towards this conjecture have been proven by \cite{BLGGT} and Thorne \cite{Thorne15}:

\cite{BLGGT} Theorem 4.5.1 proves potential automorphy for odd $R$ that are residually irreducible (and satisfying some further conditions, e.g. potential diagonalisability at $v \mid p$).

\cite{Thorne15} Theorem 7.1 proves the conjecture for certain residually reducible $R$ with $$R^{\vee} \cong R^c \epsilon^{1-m}$$ and $\ov{R}^{\rm ss} \cong \ov{\rho}_1 \oplus \ov{\rho}_2$ with $\ov{\rho}_i|_{G_K(\zeta_p)}$ adequate and $\ov{R}$ of \emph{Schur type}, i.e. $\ov{\rho}_1 \not \cong \ov{\rho}_2$ and $\epsilon^{1-m}\ov{\rho}_1^{c} \not \cong \ov{\rho}_2^{\vee}$ (so residual constituents do \emph{not} get swapped by involution, different to our set-up). 
Apart from some other technical conditions \cite{Thorne15} assumes $R$ ordinary at $v \mid p$ and 
that $\ov{R}^{\rm ss}$ is automorphic, but not that $R$ is odd. The deformation theory techniques used to study residual representations of Schur type do not apply in our setting.
\end{rem}

Similar to Calegari's result (see Theorem \ref{cal}) the oddness of automorphic Galois representations (Theorem \ref{BCsign} and Remark \ref{r64}) and Conjecture \ref{conj} implies the non-existence of even geometric representations (except for certain exceptional cases like Tate twists of even representations with finite image). In the residually reducible case our result allows the following evidence towards conjecture \ref{conj}:

\begin{cor} \label{cor8.3}
Let $(R, \Psi)$ be polarized as in Theorem \ref{general}, in particular such that $$\ov{R}^{\rm ss}=\ov{\rho} \oplus \ov{\rho}^{c \vee} \otimes \Psi^{-1}|_{G_K}.$$ If $$p \nmid \#H^1_{\Sigma^+}(K^+, {\rm As}^{\Psi(\tilde c)}(\ov{\rho})\otimes \ov{\Psi})$$ then $(R,\Psi)$ is totally odd.
\end{cor}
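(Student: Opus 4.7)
The plan is to prove the corollary as the contrapositive of Theorem \ref{general}, applied not only at the fixed $\tilde c$ but at every archimedean place $v$ of $K^+$, and to tie the resulting conditions together via the invariance of the $G_{K^+}$-representation ${\rm As}^{\Psi(c_v)}(\rho)\otimes\Psi$ under varying $v$ recorded in the remark immediately preceding the corollary.

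I would argue by contradiction: suppose there is some $v\mid\infty$ with ${\rm sign}(R,\Psi,c_v)=-1$, and apply Theorem \ref{general} with $c_v$ in place of $\tilde c$ (the theorem is valid for any complex conjugation). The superscript $-\Psi(c_v){\rm sign}(R,\Psi,c_v)$ in the conclusion then simplifies to $\Psi(c_v)$, producing a nontrivial class in
$$H^1_{\Sigma^+}(K^+,\,{\rm As}^{\Psi(c_v)}(\ov\rho)\otimes\ov\Psi);$$
the proof of Theorem \ref{general} in fact constructs this class at the mod-$\varpi$ level (via Ribet's lemma and the residual irreducibility and non-swap hypotheses on $\ov\rho$) before transferring it into the $(E/\Oo)$-valued Selmer group, so the assertion lives genuinely in the mod-$\varpi$ Selmer group. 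By the invariance noted above, the $G_{K^+}$-representation ${\rm As}^{\Psi(c_v)}(\ov\rho)\otimes\ov\Psi$ is isomorphic to ${\rm As}^{\Psi(\tilde c)}(\ov\rho)\otimes\ov\Psi$, so this Selmer group coincides with the one in the hypothesis, which has order coprime to $p$ and hence vanishes. This contradicts the nontriviality just produced, so ${\rm sign}(R,\Psi,c_v)=+1$ for every $v\mid\infty$, which is the definition of total oddness.

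The main point is the invariance step. The sign identity ${\rm sign}(R,\Psi,c_v)={\rm sign}(R,\Psi,\tilde c)\Psi(\tilde c c_v)$ established in the lemma before Remark \ref{r64} shows that the superscript $-\Psi(c_v){\rm sign}(R,\Psi,c_v)$ in Theorem \ref{general} is itself independent of $v$, so a naive application of the theorem at different archimedean places yields only one piece of Selmer-theoretic data. It is the invariance of ${\rm As}^{\Psi(c_v)}(\rho)\otimes\Psi$ as a $G_{K^+}$-representation across varying $v$ that converts this single datum into a simultaneous assertion over all $v\mid\infty$, and this is where all of the subtlety of the proof is concentrated.
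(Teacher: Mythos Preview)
Your argument is correct and follows essentially the same route as the paper: take the contrapositive of Theorem \ref{general} applied at each $c_v$, and use the invariance of ${\rm As}^{\Psi(c_v)}(\rho)\otimes\Psi$ (recorded in the remark after the corollary) to reduce to the single Selmer group in the hypothesis. Your observation that the nontrivial class is already constructed at the mod-$\varpi$ level in the proof of Theorem \ref{general} is exactly what is needed to match the statement of the corollary (which is phrased for $\ov\rho$, $\ov\Psi$) with the conclusion of the theorem (phrased for $\rho\otimes\Psi\otimes(E/\Oo)$), and the paper leaves this implicit.
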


\begin{rem}
\begin{enumerate}
\item Note that we can deduce total oddness by applying Theorem \ref{general} for all complex conjugations $c_v$ for $v \mid \infty$, without a priori assuming the independence of $\Psi(c_v)$, since $H^1_{\Sigma^+}(K^+, {\rm As}^{\Psi(c_v)}(\rho)\otimes \Psi \otimes (E/\Oo))$ is independent of $v \mid \infty$. For this we note that the Galois representations ${\rm As}^{\pm}(\rho)$ (defined as extensions of $\rho \otimes \rho^{c_v}$) are isomorphic for different $c_v$, and that ${\rm As}^{\Psi(c_v)}(\rho)\otimes \Psi$ is independent of $\Psi(c_v)$. 
\item   Corollary \ref{cor8.3} lets us give an alternative proof of Proposition \ref{baby}: Let $\sigma:G_{\bfQ} \to \GL_2(\ov{\bfQ}_p)$ be a short crystalline representation as in Proposition \ref{baby} and choose $K/\bfQ$ imaginary quadratic such that $R:=\sigma|_{G_K}$ is irreducible (i.e. such that $\sigma \not \cong \sigma \otimes \chi_{K/\bfQ}$).  We now take $K^+=\bfQ$, $n=1$, $\rho$ the trivial character $\mathbf{1}_{G_K}:G_K \to \bfZ_p^*$, and $\Psi=\epsilon^{-m}$ in Corollary \ref{cor8.3}.

Note that ${\rm As}^+(\mathbf{1}_{G_K})=\mathbf{1}_{G_{\bfQ}}$ and ${\rm As}^{\Psi(\tilde c)}(\rho)\otimes \Psi= \chi_{K/\bfQ}^m \epsilon^{-m}$. Assume that $m$ is even (if $m$ is odd then there is nothing to show). Vandiver's conjecture then tells us that $p \nmid {\rm Cl}(\bfQ(\mu_p))(\ov{\epsilon}^{-m})$. Since $\bfQ(\mu_p)$ is a ramified extension of $\bfQ(\mu_p^{\otimes m})$ we deduce that $p \nmid {\rm Cl}(\bfQ(\mu_p^{\otimes m}))(\ov{\epsilon}^{-m})$ also. The latter is isomorphic to $H^1_f(\bfQ,\bfQ_p/\bfZ_p(\ov{\epsilon}^{-m}))$ (see e.g. \cite{Rubin00} Proposition 1.6.2). 
For $\Sigma^+=\{p\}$ we therefore have $$ p \nmid \#H^1_f(\bfQ,\bfQ_p/\bfZ_p(\ov{\epsilon}^{-m}))[p]=\#H^1_{\Sigma^+}(\bfQ, {\rm As}^{\Psi(\tilde c)}(\ov{\rho})\otimes \ov{\Psi}),$$ so Corollary \ref{cor8.3} implies that $R$ is odd with respect to $\Psi=\epsilon^{-m}$. As explained in Example \ref{ex5.8} this would require $\Psi(\tilde c)=-1$, a contradiction.
\item
If $(R,\Psi)$ satisfy further the assumptions of Corollary \ref{cor} (but without assuming that $(R, \Psi)$ is odd) then the Asai representation in Corollary \ref{cor8.3} is not critical. 
As explained in (2), in the particular case of $\epsilon^{-m}$ for even $m$ Vandiver's conjecture predicts that  the non-divisibility assumption in Corollary \ref{cor8.3} is satisfied.

\end{enumerate}
\end{rem}

\bibliographystyle{amsalpha}
\bibliography{paramod}

\providecommand{\bysame}{\leavevmode\hbox to3em{\hrulefill}\thinspace}
\providecommand{\MR}{\relax\ifhmode\unskip\space\fi MR }
\providecommand{\MRhref}[2]{%
  \href{http://www.ams.org/mathscinet-getitem?mr=#1}{#2}
}
\providecommand{\href}[2]{#2}
\begin{thebibliography}{BLGGT14}

\bibitem[BC09]{BellaicheChenevierbook}
J.~Bella{\"{\i}}che and G.~Chenevier, \emph{$p$-adic families of {G}alois
  representations and higher rank {S}elmer groups}, Ast\'erisque (2009),
  no.~324.

\bibitem[BC11]{BellaicheCheneviersign}
Jo{\"e}l Bella{\"{\i}}che and Ga{\"e}tan Chenevier, \emph{The sign of {G}alois
  representations attached to automorphic forms for unitary groups}, Compos.
  Math. \textbf{147} (2011), no.~5, 1337--1352.

\bibitem[Ber15]{Berger15}
Tobias Berger, \emph{On the {B}loch-{K}ato conjecture for the {A}sai
  $l$-function}, Preprint available at \url{http://arxiv.org/abs/1507.00684}.

\bibitem[BK90]{BlochKato90}
S.~Bloch and K.~Kato, \emph{{$L$}-functions and {T}amagawa numbers of motives},
  The Grothendieck Festschrift, Vol.\ I, Progr. Math., vol.~86, Birkh\"auser
  Boston, Boston, MA, 1990, pp.~333--400.

\bibitem[BK13]{BergerKlosin13}
T.~Berger and K.~Klosin, \emph{On deformation rings of residually reducible
  {G}alois representations and ${R}={T}$ theorems}, Math. Ann. \textbf{355}
  (2013), no.~2, 481--518.

\bibitem[BLGGT14]{BLGGT}
Thomas Barnet-Lamb, Toby Gee, David Geraghty, and Richard Taylor,
  \emph{Potential automorphy and change of weight}, Ann. of Math. (2)
  \textbf{179} (2014), no.~2, 501--609.

\bibitem[Cal11]{CalegariPoincare}
Frank Calegari, \emph{Even {G}alois representations ({L}ecture notes {P}oincare
  {I}nstitute)}, Preprint available at
  \url{http://www.math.northwestern.edu/~fcale/papers/FontaineTalk-Adjusted.pdf}.

\bibitem[Cal12]{Calegari2012}
Frank Calegari, \emph{Even {G}alois representations and the {F}ontaine--{M}azur
  conjecture. {II}}, J. Amer. Math. Soc. \textbf{25} (2012), no.~2, 533--554.

\bibitem[Car12]{Cariani2012}
Ana Caraiani, \emph{Local-global compatibility and the action of monodromy on
  nearby cycles}, Duke Math. J. \textbf{161} (2012), no.~12, 2311--2413.

\bibitem[Car14]{Cariani2014}
\bysame, \emph{Monodromy and local-global compatibility for {$l=p$}}, Algebra
  Number Theory \textbf{8} (2014), no.~7, 1597--1646.

\bibitem[CH13]{ChenevierHarris}
Ga{\"e}tan Chenevier and Michael Harris, \emph{Construction of automorphic
  {G}alois representations, {II}}, Camb. J. Math. \textbf{1} (2013), no.~1,
  53--73.

\bibitem[CHT08]{CHT}
Laurent Clozel, Michael Harris, and Richard Taylor, \emph{Automorphy for some
  {$l$}-adic lifts of automorphic mod {$l$} {G}alois representations}, Publ.
  Math. Inst. Hautes \'Etudes Sci. (2008), no.~108, 1--181, With Appendix A,
  summarizing unpublished work of Russ Mann, and Appendix B by Marie-France
  Vign{\'e}ras.

\bibitem[Clo90]{Clozel90}
Laurent Clozel, \emph{Motifs et formes automorphes: applications du principe de
  fonctorialit\'e}, Automorphic forms, {S}himura varieties, and
  {$L$}-functions, {V}ol.\ {I} ({A}nn {A}rbor, {MI}, 1988), Perspect. Math.,
  vol.~10, Academic Press, Boston, MA, 1990, pp.~77--159.

\bibitem[DFG04]{DiamondFlachGuo04}
F.~Diamond, M.~Flach, and L.~Guo, \emph{The {T}amagawa number conjecture of
  adjoint motives of modular forms}, Ann. Sci. \'Ecole Norm. Sup. (4)
  \textbf{37} (2004).

\bibitem[Dum15]{Dummigan14}
Neil Dummigan, \emph{Eisenstein congruences for unitary groups}, Preprint
  available at \url{http://neil-dummigan.staff.shef.ac.uk/unitarycong4.pdf}.

\bibitem[FM95]{FontaineMazur}
Jean-Marc Fontaine and Barry Mazur, \emph{Geometric {G}alois representations},
  Elliptic curves, modular forms, \& {F}ermat's last theorem ({H}ong {K}ong,
  1993), Ser. Number Theory, I, Int. Press, Cambridge, MA, 1995, pp.~41--78.

\bibitem[GK15]{GoldringK}
Wushi Goldring and Jean-Stefan Koskivirta, \emph{Strata {H}asse invariants,
  {H}ecke algebras and {G}alois representations}, Preprint available at
  \url{http://arxiv.org/abs/1507.05032}.

\bibitem[GS15]{GrbacShahidi}
Neven Grbac and Freydoon Shahidi, \emph{Endoscopic transfer for unitary groups
  and holomorphy of {A}sai {$L$}-functions}, Pacific J. Math. \textbf{276}
  (2015), no.~1, 185--211.

\bibitem[Har13]{Harris13}
Michael Harris, \emph{{$L$}-functions and periods of adjoint motives}, Algebra
  Number Theory \textbf{7} (2013), no.~1, 117--155.

\bibitem[HLTT16]{HLTT}
Michael Harris, Kai-Wen Lan, Richard Taylor, and Jack Thorne, \emph{On the
  rigid cohomology of certain {S}himura varieties}, Res. Math. Sci. \textbf{3}
  (2016), 3:37.

\bibitem[Kha00]{Khare00}
Chandrashekhar Khare, \emph{Notes on {R}ibet's converse to {H}erbrand},
  Cyclotomic fields and related topics ({P}une, 1999), Bhaskaracharya
  Pratishthana, Pune, 2000, pp.~273--284.

\bibitem[Kri03]{Krishna03}
M.~Krishnamurthy, \emph{The {A}sai transfer to {$\rm GL_4$} via the
  {L}anglands-{S}hahidi method}, Int. Math. Res. Not. (2003), no.~41,
  2221--2254.

\bibitem[Kri12]{Krishna12}
\bysame, \emph{Determination of cusp forms on {$GL(2)$} by coefficients
  restricted to quadratic subfields (with an appendix by {D}ipendra {P}rasad
  and {D}inakar {R}amakrishnan)}, J. Number Theory \textbf{132} (2012), no.~6,
  1359--1384.

\bibitem[Pra92]{Prasad92}
Dipendra Prasad, \emph{Invariant forms for representations of {${\rm GL}_2$}
  over a local field}, Amer. J. Math. \textbf{114} (1992), no.~6, 1317--1363.

\bibitem[PS16]{PilloniStroh}
Vincent Pilloni and Beno{\^{\i}}t Stroh, \emph{Cohomologie coh\'erente et
  repr\'esentations {G}aloisiennes}, Ann. Math. Qu\'e. \textbf{40} (2016),
  no.~1, 167--202.

\bibitem[Ram02]{Ramak02}
Dinakar Ramakrishnan, \emph{Modularity of solvable {A}rtin representations of
  {${\rm GO}(4)$}-type}, Int. Math. Res. Not. (2002), no.~1, 1--54.

\bibitem[Ram04]{Ramak04}
\bysame, \emph{Algebraic cycles on {H}ilbert modular fourfolds and poles of
  {$L$}-functions}, Algebraic groups and arithmetic, Tata Inst. Fund. Res.,
  Mumbai, 2004, pp.~221--274.

\bibitem[Rib76]{Ribet76}
Kenneth~A. Ribet, \emph{A modular construction of unramified {$p$}-extensions
  of {$\bfQ(\mu _{p})$}}, Invent. Math. \textbf{34} (1976), no.~3, 151--162.

\bibitem[Rub00]{Rubin00}
Karl Rubin, \emph{Euler systems}, Annals of Mathematics Studies, vol. 147,
  Princeton University Press, Princeton, NJ, 2000, Hermann Weyl Lectures. The
  Institute for Advanced Study.

\bibitem[Sch15]{Scholze15}
Peter Scholze, \emph{On torsion in the cohomology of locally symmetric
  varieties}, Ann. of Math. (2) \textbf{182} (2015), no.~3, 945--1066.

\bibitem[SU14]{SkinnerUrban14}
Christopher Skinner and Eric Urban, \emph{The {I}wasawa main conjectures for
  {$\rm GL_2$}}, Invent. Math. \textbf{195} (2014), no.~1, 1--277.

\bibitem[Tho15]{Thorne15}
Jack~A. Thorne, \emph{Automorphy lifting for residually reducible {$l$}-adic
  {G}alois representations}, J. Amer. Math. Soc. \textbf{28} (2015), no.~3,
  785--870.

\bibitem[Urb01]{Urban01}
E.~Urban, \emph{Selmer groups and the {E}isenstein-{K}lingen ideal}, Duke Math.
  J. \textbf{106} (2001), no.~3, 485--525.

\end{thebibliography}

\end{document}